\newcommand{\non}{\mathrm{non}}
\newcommand{\add}{\mathrm{add}}
\newcommand{\cof}{\mathrm{cof}}
\newcommand{\M}{\mathcal{M}}
\newcommand{\K}{\mathcal{K}}
\newcommand{\F}{\mathcal{F}}
\newcommand{\C}{\mathcal{C}}
\newcommand{\A}{\mathcal A}
\newcommand{\w}{\omega}
\newcommand{\IR}{\mathbb R}
\newcommand{\IQ}{\mathbb Q}
\newcommand{\II}{\mathbb I}
\newcommand{\e}{\varepsilon}
\newcommand{\BB}{\mathbf{B}}
\newcommand{\W}{\mathcal W}
\newcommand{\U}{\mathcal U}
\newcommand{\V}{\mathcal V}
\newcommand{\cl}{\operatorname{cl}}
\newcommand{\Int}{\operatorname{Int}}
\newcommand{\Ra}{\Rightarrow}
\newcommand{\diam}{\mathrm{diam}}
\newtheorem{theorem}{Theorem}[section]
\newtheorem{problem}[theorem]{Problem}
\newtheorem{proposition}[theorem]{Proposition}
\newtheorem{corollary}[theorem]{Corollary}
\newtheorem{lemma}[theorem]{Lemma}
\theoremstyle{definition}
\newtheorem{remark}[theorem]{Remark}
\title{Hereditarily supercompact spaces}
\author{Taras Banakh, Zdzis\l aw Koszto\l owicz, S\l awomir Turek}
\address{T.Banakh: Ivan Franko National University of Lviv (Ukraine) and Jan Kochanowski University in Kielce (Poland)}
\email{t.o.banakh@gmail.com}
\address{Z.Koszto\l owicz, S.Turek: Jan Kochanowski University in Kielce (Poland)}
\email{zdzisko@ujk.kielce.pl, sturek@ujk.kielce.pl}
\subjclass{}
\keywords{Hereditarily supercompact space, monotonically normal space}
\begin{document}

\begin{abstract} A topological space $X$ is called {\em hereditarily supercompact} if each closed subspace of $X$ is supercompact.
By a combined result of Bula, Nikiel, Tuncali, Tymchatyn, and Rudin, each monotonically normal compact Hausdorff space is hereditarily supercompact. A dyadic compact space is hereditarily supercompact if and only if it is metrizable. Under (MA$+\neg$CH) each separable hereditarily supercompact space is hereditarily separable and hereditarily Lindel\"of. This implies that under (MA$+\neg$CH) a scattered compact space is metrizable if and only if it is separable and hereditarily supercompact. The hereditary supercompactness is not productive: the product $[0,1]\times \alpha D$ of the closed interval and the one-point compactification $\alpha D$ of a discrete space $D$ of cardinality $|D|\ge \non(\M)$ is not hereditarily supercompact (but is Rosenthal compact and uniform Eberlein compact).
Moreover, under the assumption $\cof(\M)=\w_1$ the space $[0,1]\times\alpha D$ contains a closed subspace $X$ which is first countable and hereditarily paracompact but not supercompact.
\end{abstract}

\maketitle

\section{Introduction}

This paper is devoted to studying hereditarily supercompact spaces. By definition, a topological space $X$ is {\em hereditarily supercompact} if each closed subspace $Z$ of $X$ is supercompact.

We recall that a topological space $X$ is called {\em supercompact} if it has  a subbase of the topology such that each cover of $X$ by elements of this subbase has a two element subcover. By the Alexander Lemma (see e.~g.~\cite[3.12.2~(a)]{En}), each
supercompact space is compact. The theory of supercompact spaces was intensively developed in 70-90-ies of XX century, see \cite{Bell}, \cite{BNTT}, \cite{Dem}, \cite{JvM}, \cite{vMM}, \cite{Mills}, \cite{SS}.

According to a result of Strok and Szymanski \cite{SS}, each compact metrizable space is supercompact. Alternative proofs of this important results were given by van Douwen \cite{Douwen}, Mills \cite{Mills} and D\c ebski \cite{Dem}. The most general result in this direction was obtained by Bula, Nikiel, Tuncali, Tymchatyn \cite{BNTT} and Rudin \cite{Rudin} who proved that each monotonically normal compact space is supercompact and hence hereditarily supercompact. On the other hand, a dyadic compact space (in particular, a Tychonoff or Cantor cube) is hereditarily supercompact if and only if it is metrizable, see Theorem~\ref{dyadic}. In Corollary~\ref{c:MACH} we prove that under (MA$+\neg$CH), each separable hereditarily supercompact space is hereditarily separable and hereditarily Lindel\"of.
This implies that under (MA$+\neg$CH) a scattered compact space is metrizable if and only if separable and hereditarily supercompact.

In this paper we present several constructions of hereditarily supercompact spaces and shall prove that the hereditary supercompactness is not productive: the closed interval $[0,1]$ and the one-point compactification $\alpha D$ of a discrete space $D$ of cardinality $|D|\ge\non(\M)\ge\aleph_1$ are hereditarily supercompact but their product $[0,1]\times\alpha D$ is not. Here $\non(\M)$ denotes the smallest cardinality of a non-meager subset of the real line. Under certain Set-Theoretic Assumptions (namely, $\cof(\M)=\w_1$), the product $[0,1]\times\alpha D$ contains a closed subspace $X$ which is first countable, hereditarily paracompact, but not supercompact. Being a closed subspace of $[0,1]\times\alpha D$, the space $X$ is uniform Eberlein and Rosenthal compact.

\section{Some properties of hereditarily supercompact spaces}

In this section we study on the interplay between the class of hereditary supercompact spaces and other classes of compact spaces and present several constructions preserving the hereditary supercompactness. All topological spaces considered in this paper are regular and hence Hausdorff. For a subset $A$ of a topological space $X$ by $\cl(A)$ and $\Int(A)$ we shall denote the closure and interior of $A$ in $X$, respectively. By $[X]^{<\w}$ we shall denote the family of all non-empty finite subsets of a set $X$.

To detect supercompact spaces we shall apply a characterization of the supercompactness in terms of binary closed $k$-networks.

We recall that a family $\K$ of subsets of a topological space $X$ is called
\begin{itemize}
\item {\em linked} if $A\cap B\ne\emptyset$ for any subsets $A,B\in\K$;
\item {\em binary} if each finite linked subfamily $\mathcal L\subset\K$ has non-empty intersection $\bigcap\mathcal L\ne\emptyset$;
\item a {\em $k$-network} if for each open set $U\subseteq X$ and a compact subset $K\subseteq U$ there is a finite subfamily $\F\subseteq\K$ such that  $K\subseteq\bigcup\F\subseteq U$;
\item a {\em closed $k$-network} if $\K$ is a $k$-network consisting of closed subsets of $X$.
\end{itemize}

To detect closed $k$-networks we shall apply the following simple lemma.

\begin{lemma}\label{lm1}
A family $\mathcal{K}$ of closed subsets of compact space $X$ is a $k$-network in $X$ if and only if for each point 
$x\in X$ and its neighborhood $U\subseteq X$ there is a finite subfamily $\F\subseteq\K$ such that $x\in\Int(\bigcup\F)\subseteq \bigcup\F\subseteq U$ and $x\in \cl(\Int(F))$ for all $F\in\F$.
\end{lemma}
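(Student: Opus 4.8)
My plan is to prove the two implications separately, with essentially all the work going into the ``only if'' direction; the ``if'' direction is a routine compactness argument. For the \emph{reverse} implication I would assume the local condition and verify the $k$-network definition directly: given an open $U$ and a compact $K\subseteq U$, for every $x\in K$ the hypothesis supplies a finite $\F_x\subseteq\K$ with $x\in\Int(\bigcup\F_x)\subseteq\bigcup\F_x\subseteq U$. The open sets $\Int(\bigcup\F_x)$, $x\in K$, cover $K$, so by compactness finitely many of them, say for $x_1,\dots,x_m$, already cover $K$; then $\F=\F_{x_1}\cup\dots\cup\F_{x_m}$ is finite and satisfies $K\subseteq\bigcup\F\subseteq U$. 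Note that the extra clause $x\in\cl(\Int F)$ plays no role here.

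For the \emph{forward} implication I would assume $\K$ is a closed $k$-network, fix a point $x$ and a neighborhood $U$ (replacing $U$ by $\Int(U)$, I may assume $U$ is open). Using regularity of $X$ I first choose an open $V$ with $x\in V\subseteq\cl(V)\subseteq U$. Since $\cl(V)$ is a compact subset of the open set $U$, the $k$-network property yields a finite $\F_0\subseteq\K$ with $\cl(V)\subseteq\bigcup\F_0\subseteq U$; in particular $x\in V\subseteq\Int(\bigcup\F_0)$, so $\F_0$ already fulfils every requirement except possibly the last one, as some members $F\in\F_0$ may fail $x\in\cl(\Int F)$. The natural remedy is to discard exactly these members: set $\F=\{F\in\F_0: x\in\cl(\Int F)\}$. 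Then $\F\subseteq\K$ is finite, each of its members satisfies the required clause by construction, and $\bigcup\F\subseteq\bigcup\F_0\subseteq U$; the whole proof reduces to showing that discarding did not destroy the neighborhood, i.e.\ that $x\in\Int(\bigcup\F)$.

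The hard part is exactly this last step, and I expect it to hinge on an elementary Baire-type fact for finite families: \emph{a non-empty open subset of any space cannot be covered by finitely many closed nowhere dense sets} (proved by deleting the closures one at a time). To use it, for each discarded $F\in\F_0\setminus\F$ pick an open $W_F\ni x$ with $W_F\cap\Int(F)=\emptyset$, and put $W=V\cap\bigcap\{W_F: F\in\F_0\setminus\F\}$, an open neighborhood of $x$. I then claim $W\subseteq\bigcup\F$. Indeed, any non-empty open $O\subseteq W$ satisfies $O\subseteq\cl(V)\subseteq\bigcup\F_0$, so by the Baire-type fact some $F\in\F_0$ has $O\cap\Int(F)\ne\emptyset$; this $F$ cannot have been discarded, since $O\subseteq W\subseteq W_F$ would force $O\cap\Int(F)=\emptyset$, hence $F\in\F$. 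Thus $\bigcup_{F\in\F}\Int(F)$ meets every non-empty open subset of $W$, i.e.\ $W\subseteq\cl(\bigcup_{F\in\F}\Int(F))\subseteq\bigcup\F$, the last inclusion because $\F$ is finite and its members are closed. Therefore $x\in W\subseteq\Int(\bigcup\F)$, which completes the forward implication and hence the lemma.
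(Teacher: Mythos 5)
Your proof is correct and follows essentially the same route as the paper's: regularity yields $V$ with $\cl V\subseteq U$, the $k$-network gives a finite cover $\F_0$ of $\cl V$ inside $U$, you keep exactly the members with $x\in\cl(\Int F)$, shrink the neighborhood of $x$ to avoid the interiors of the discarded members, and invoke the finite Baire-type fact that a non-empty open set covered by finitely many closed sets must meet the interior of one of them. The paper phrases this last step as a contradiction (its set $V_x$ is your $W$, with $W_F=X\setminus\cl(\Int F)$), while you argue directly via density of $\bigcup_{F\in\F}\Int(F)$ in $W$; the difference is purely cosmetic.
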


\begin{proof} The ``if'' part is trivial. To prove the ``only if'' part, assume that $\mathcal{K}$ is a $k$-network in $X$. Let us fix a point $x\in X$ and its neighbourhood $U$.  By the regularity of $X$ there is an open set $V$ such that $x\in V\subseteq \cl V\subseteq U$. The family $\K$, being a  $k$-network, contains a finite subfamily $\mathcal{F}\subseteq\mathcal{K}$ such that $\cl V\subseteq \bigcup \mathcal{F}\subseteq U$. In the family $\F$ consider the subfamily $\F_x=\{F\in\F:x\in\cl(\Int(F))\}$ and observe that the set $V_x=V\setminus\bigcup_{F\in\F\setminus\F_x}\cl(\Int(F))$ is an open neighborhood of $x$. It remains to prove that $V_x\subset \bigcup\F_x$. Assuming the opposite, we would conclude that the open set $W=V_x\setminus\bigcup\F_x$ is non-empty. Since $W\subset V_x\subset V\subset \bigcup\F$, for some set $F\in\F\setminus \F_x$ the intersection $F\cap W$ has non-empty interior in $W$, which implies that the sets $W\subset V_x$ meet the interior of $F$. But this contradicts the choice of the set $V_x$.
\end{proof}

The following characterization of supercompactness is well-known for specialists.
We present the proof for convenience of the reader.

\begin{theorem}\label{k-net}
A compact Hausdorff space $X$ is supercompact if and only if it possesses a binary closed $k$-network.
\end{theorem}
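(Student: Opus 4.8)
The plan is to route both implications through the intermediate notion of a \emph{binary closed subbase}, i.e.\ a family $\mathcal S$ of closed sets whose complements form a subbase of the topology and such that every linked subfamily of $\mathcal S$ has non-empty intersection. First I would record the folklore equivalence that $X$ is supercompact if and only if it possesses a binary closed subbase, by dualizing the definition: for a closed subbase $\mathcal S$ the complements $\{X\setminus S:S\in\mathcal S\}$ form an open subbase, a subfamily $\{X\setminus S_i\}$ covers $X$ exactly when $\bigcap_i S_i=\emptyset$, and it admits a two-element subcover exactly when some two $S_i,S_j$ are disjoint. Hence the two-element-subcover property of the open subbase is literally the assertion that no linked subfamily of $\mathcal S$ has empty intersection. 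Here one passes from finite linked subfamilies (as in the definition of \emph{binary}) to arbitrary ones using compactness, since a linked family of closed sets has the finite intersection property and $X$ is compact.

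For the implication \emph{binary closed $k$-network $\Ra$ supercompact}, suppose $\K$ is a binary closed $k$-network. I would first observe that $\K$ is automatically a closed subbase. Indeed, given a point $x$ and a closed set $C$ with $x\notin C$, applying the $k$-network property to the open set $U=X\setminus\{x\}\supseteq C$ (note that $C$ is compact, as $X$ is) yields a finite $\F\subseteq\K$ with $C\subseteq\bigcup\F\subseteq X\setminus\{x\}$; thus $C\subseteq\bigcup\F$ while $x\notin F$ for every $F\in\F$, which is exactly the separation making $\{X\setminus F:F\in\K\}$ a subbase. Since $\K$ is binary by hypothesis, it is a binary closed subbase, and the equivalence of the previous paragraph gives that $X$ is supercompact.

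For the reverse implication I start from a binary closed subbase $\mathcal S$ (obtained from supercompactness as above) and aim to manufacture a binary closed $k$-network. The subbase itself need not be a $k$-network (for the usual subbase $\{[0,a]\}\cup\{[b,1]\}$ of $[0,1]$ no finite union of members fits inside a small interval around an interior point), so I would pass to the family $\mathcal S^{*}$ of all finite intersections of members of $\mathcal S$. This family is again binary: if $\mathcal L\subseteq\mathcal S^{*}$ is linked, then the collection $\mathcal S_0\subseteq\mathcal S$ of all generators occurring in members of $\mathcal L$ is itself linked (two generators $S\supseteq L$ and $S'\supseteq L'$ arising from $L,L'\in\mathcal L$ satisfy $S\cap S'\supseteq L\cap L'\ne\emptyset$), whence $\bigcap\mathcal L=\bigcap\mathcal S_0\ne\emptyset$ by binariness of $\mathcal S$.

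It remains to verify that $\mathcal S^{*}$ is a $k$-network, for which I would invoke Lemma~\ref{lm1}: given $x$ and a neighborhood $U$, I must produce a finite $\F\subseteq\mathcal S^{*}$ with $x\in\Int(\bigcup\F)\subseteq\bigcup\F\subseteq U$ and $x\in\cl(\Int(F))$ for every $F\in\F$. Obtaining members of $\mathcal S^{*}$ that contain $x$ and lie inside $U$ is the easy part: for each $y\in X\setminus U$ the subbase separates $x$ from $y$ (some $R\in\mathcal S$ has $x\in R$ and $y\notin R$), and compactness of $X\setminus U$ extracts finitely many such $R$ whose intersection $T\in\mathcal S^{*}$ satisfies $x\in T\subseteq U$. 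The hard part, and the main obstacle, is the \emph{interior} requirement: a priori $x$ may lie on the boundary of every generator containing it, so that $\bigcup\F$ fails to be a neighborhood of $x$. This is precisely where binariness must be used in full strength: a binary closed subbase is \emph{normal}, and normality allows one to separate $x$ from the closed set $X\setminus U$ with room to spare, covering an entire neighborhood of $x$ by finitely many members of $\mathcal S^{*}$, each having $x$ in the closure of its interior. I expect establishing this normality/neighborhood-separation property and using it to meet the hypotheses of Lemma~\ref{lm1} to be the technical heart of the argument; once it is in place, $\mathcal S^{*}$ is a binary closed $k$-network and the theorem follows.
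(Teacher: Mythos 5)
Your first two steps are sound: the equivalence of supercompactness with the existence of a binary closed subbase (including the compactness bridge from finite linked subfamilies to arbitrary ones), the verification that a binary closed $k$-network is automatically a closed subbase, and the observation that the family $\mathcal S^{*}$ of finite intersections of members of a binary family is again binary are all correct --- on the first two points you are in fact more careful than the paper, whose ``if'' direction is stated tersely. The genuine gap is in the final step: you never actually prove that $\mathcal S^{*}$ is a $k$-network. You reduce the task to the interior conditions of Lemma~\ref{lm1} and then appeal to the claim that ``a binary closed subbase is normal,'' which you leave unproved and which is not a known fact. Normality of a (binary) subbase is an \emph{additional} hypothesis in this theory, precisely what singles out the normally/regularly supercompact spaces --- compare the title of \cite{BNTT} and van Mill's tract \cite{JvM} --- so the step you yourself identify as ``the technical heart'' is missing, and there is no reason to expect it can be supplied in the generality you need.

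The deeper issue is that the route through the ``if'' direction of Lemma~\ref{lm1} imposes interior requirements that the definition of a $k$-network does not contain, and the paper's proof verifies the $k$-network property directly, with no interiors at all. First note that closing the open subbase $\mathcal B$ under finite unions preserves the two-element subcover property (refine a cover by unions to a cover by subbase elements, extract a two-element subcover, pass back), so you may assume $\mathcal B$ is union-stable and hence the dual family $\K=\{X\setminus U:U\in\mathcal B\}$ is a binary closed subbase stable under finite intersections (your $\mathcal S^{*}$ serves equally well). Now, given a compact $K\subseteq U$ with $U$ open, do not try to cover $K$ from inside by members of $\K$; instead cover the compact set $X\setminus U$: each $y\in X\setminus U$ has a basic open neighborhood $W_y$ (a finite intersection of elements of $\mathcal B$) disjoint from $K$, finitely many $W_{y_1},\dots,W_{y_l}$ cover $X\setminus U$, and complementing yields $K\subseteq A_1\cap\dots\cap A_l\subseteq U$, where each $A_j=X\setminus W_{y_j}$ is a finite union of members of $\K$. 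Distributivity rewrites $A_1\cap\dots\cap A_l$ as a finite union of finite intersections of members of $\K$, and intersection-stability puts these back into $\K$; the resulting finite subfamily witnesses the $k$-network property. Lemma~\ref{lm1} then gives you the interior conditions you were struggling with \emph{a posteriori}: its nontrivial direction derives them from the $k$-network property, not the other way around.
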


\begin{proof} To prove the ``if'' part, assume that a compact space $X$ possesses a binary closed $k$-network $\mathcal{K}$. Then binary property of $\K$ implies that the family $\{X\setminus K\colon K\in\mathcal{K}\}$ is open subbase in $X$ witnessing that the compact space $X$ is supercompact.

To prove the ``only if'' part, assume that the space $X$ is supercompact. Then there is a subbase $\mathcal B$ of the topology of $X$ such that each cover of $X$ by elements of the subbase contains a two-element subcover. Without loss of generality, the family $\mathcal B$ is stable under unions. It follows that the family $\K=\{X\setminus U:U\in\mathcal B\}$ is a  binary subbase for closed sets, stable under intersections. It remains to check that this family is a $k$-network for $X$. Fix a compact set $K$ in $X$ and an open neighborhood $U$ of $K$ in $X$. By the compactness there is a finite family $\{A_1,\dots,A_l\}$ consisting of finite unions $A_i$ of elements of $\mathcal{K}$, such that $K\subseteq A_1\cap\dots\cap A_l\subseteq U$. It remains observe that the set  $A_1\cap\dots\cap A_l$ is a finite union of elements of $\mathcal{K}$ because $\mathcal{K}$ is stable under intersections.
\end{proof}

Supercompact spaces have the following property established by M.~Bell \cite{Bell}.

\begin{theorem}[Bell]\label{bell} If a space $X$ is a continuous image of a supercompact space, then for each open dense subset $U\subseteq X$ the complement $X\setminus U$ has density $d(X\setminus U)\le nw(U)\le|U|$.
\end{theorem}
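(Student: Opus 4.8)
The plan is to realize $X$ as a continuous image $f\colon S\to X$ of a supercompact space $S$ and to transport the combinatorics of the binary closed subbase of $S$ down to a construction of a small dense subset of the closed nowhere dense set $C:=X\setminus U$. The trivial inequality $nw(U)\le|U|$ needs no hypothesis, since the singletons of $U$ already form a network of the subspace $U$; so the whole content is the bound $d(C)\le nw(U)$. First I would invoke Theorem~\ref{k-net} to fix a binary closed $k$-network $\K$ on $S$ and record that, by compactness of $S$, the binary property upgrades from finite to arbitrary linked subfamilies: any pairwise intersecting $\mathcal L\subseteq\K$ has $\bigcap\mathcal L\ne\emptyset$. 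Writing $\kappa=nw(U)$ and fixing a network $\mathcal N$ of the subspace $U$ with $|\mathcal N|=\kappa$, the goal becomes the construction of a set $D\subseteq C$ with $|D|\le\kappa$ that is dense in $C$ (the case of finite $\kappa$ is degenerate and treated separately).

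The indexing idea is to use finite subfamilies of $\mathcal N$: since $\kappa$ is infinite, the family $[\mathcal N]^{<\w}$ of finite subfamilies again has cardinality $\kappa$. For each $\mathcal L\in[\mathcal N]^{<\w}$ put $F_{\mathcal L}:=\bigl(\bigcap_{N\in\mathcal L}\cl(N)\bigr)\cap C$, a closed subset of $C$ (closures taken in $X$); whenever $F_{\mathcal L}\ne\emptyset$ I choose a point $d_{\mathcal L}\in F_{\mathcal L}$, and let $D$ be the set of all chosen points. Then automatically $|D|\le|[\mathcal N]^{<\w}|=\kappa$, so everything reduces to proving that $D$ is dense in $C$, i.e. to the following \emph{trapping lemma}: for every $c\in C$ and every neighbourhood $O$ of $c$ in $X$ there is $\mathcal L\in[\mathcal N]^{<\w}$ with $c\in\bigcap_{N\in\mathcal L}\cl(N)$ and $\bigl(\bigcap_{N\in\mathcal L}\cl(N)\bigr)\cap C\subseteq O$. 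Indeed, the second condition forces $d_{\mathcal L}\in O$ while the first guarantees $F_{\mathcal L}\ne\emptyset$, so $O\cap D\ne\emptyset$.

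The trapping lemma is the heart of the matter and the only place where supercompactness is used; that it genuinely requires the hypothesis is shown by $\beta\w$, where $U=\w$ is open dense with $nw(U)=\w$ while $d(\beta\w\setminus\w)=\mathfrak c$, so no such trapping can exist and $\beta\w$ is not a continuous image of a supercompact space. To prove the lemma I would lift the situation to $S$. The fibre $f^{-1}(c)$ is a compact subset of the open set $f^{-1}(O)$, so by the $k$-network property (Lemma~\ref{lm1}) there is a finite $\mathcal G\subseteq\K$ with $f^{-1}(c)\subseteq\Int\bigcup\mathcal G\subseteq\bigcup\mathcal G\subseteq f^{-1}(O)$; passing to the saturated open set $X\setminus f(S\setminus\Int\bigcup\mathcal G)$ I may assume $O$ is $f$-saturated with $f^{-1}(O)\subseteq\bigcup\mathcal G$. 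Using density of $U$ in $X$ I would then approximate the members of $\mathcal G$ meeting $f^{-1}(U)$ by elements of the network $\mathcal N$ lying inside $O\cap U$, and invoke binarity of $\K$ to convert the finite \emph{upstairs} cover of $f^{-1}(c)$ into the required finite \emph{downstairs} intersection of closures whose trace on $C$ stays inside $O$.

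The main obstacle is precisely this last descent: translating the finite $k$-network cover of the fibre $f^{-1}(c)$ into control by finitely many closures $\cl(N)$ of network elements of $U$. The one-sided half---that arbitrarily many $N\in\mathcal N$ with $N\subseteq O\cap U$ accumulate to $c$---is immediate from density and the network property, but upgrading this to a \emph{finite} subfamily whose closures intersect in a subset of $O$ is exactly the rigidity supplied by the binary intersection property of the closed subbase, and arranging the bookkeeping between $\mathcal G$ on $S$ and $\mathcal N$ on $U$ so that the count stays at $\kappa$ rather than at the weight of $S$ is the delicate point. Once the trapping lemma is in hand, density of $D$ and the cardinality bound give $d(C)\le\kappa=nw(U)$, completing the proof.
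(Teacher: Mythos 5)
The paper does not actually prove this statement: it is imported verbatim from Bell's paper \cite{Bell}, so your proposal can only be judged on its own terms. On those terms it has a fatal structural flaw: your reduction replaces the theorem by a \emph{false} statement. Observe what the ``trapping lemma'' really says. The family $\big\{\big(\bigcap_{N\in\mathcal L}\cl(N)\big)\cap C:\mathcal L\in[\mathcal N]^{<\w}\big\}$ has cardinality at most $\kappa=nw(U)$ and consists of subsets of $C$; the trapping lemma asserts that for each $c\in C$ and each neighborhood $O$ of $c$ some member of this family contains $c$ and lies in $O$. That is precisely the assertion that $C$ has a network of size at most $\kappa$, i.e.\ your lemma would prove $nw(X\setminus U)\le nw(U)$ --- strictly stronger than the density bound in Bell's theorem, and false under its hypotheses. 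Concretely, let $X=\big([0,1]\times\{0,1\}\big)\cup\big((\IQ\cap[0,1])\times\{1/2\}\big)$ with the lexicographic order; this order is Dedekind complete with endpoints, so $X$ is a linearly ordered compactum, hence supercompact (the closed rays form a binary subbase; alternatively quote \cite{BNTT}), and in particular a continuous image of a supercompact space. Each point $(q,1/2)$ is isolated, having immediate neighbours $(q,0)$ and $(q,1)$, and the set $U$ of all isolated points of $X$ is a countable open dense subset with $nw(U)=\w$, while $C=X\setminus U$ is a copy of the split interval (double arrow space): separable --- so Bell's conclusion $d(C)\le\w$ does hold --- but compact and non-metrizable, whence $nw(C)=w(C)>\w$. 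Therefore \emph{no} choice of a network $\mathcal N$ of size $\w$ can satisfy your trapping lemma here; the scheme cannot be repaired by closing $\mathcal N$ under unions or by choosing it more cleverly.

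The same example shows the point-picking construction collapses outright, not just the lemma: since $U$ is discrete, every network of $U$ contains all singletons, and the minimal network consists exactly of them; then $\cl(N)=N\subseteq U$ for every $N\in\mathcal N$, so every $F_{\mathcal L}$ is empty and $D=\emptyset$ while $C\ne\emptyset$. (Already the convergent sequence $\alpha\w$ with $U=\w$ and the singleton network exhibits this.) The moral is that a point of $C$ is a limit of the union of \emph{infinitely many} network elements but need not lie in the closure of any single one, so the binarity of $\K$ upstairs cannot be channelled downstairs through finite intersections of closures $\cl(N)$; a correct argument has to let infinite subfamilies of $\mathcal N$ interact with the binary subbase on $S$ --- for instance through subbase hulls of the preimages $f^{-1}(N)$ --- before descending along $f$. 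You yourself flagged the descent step as an unresolved ``delicate point,'' so even by its own account the heart of the proof is missing; the refutation above shows that the frame built around that missing heart is unfixable as stated. The peripheral parts of your write-up are fine ($nw(U)\le|U|$ is trivial, compactness does upgrade binarity to arbitrary linked subfamilies, and $d(C)\le\kappa$ would indeed follow from a dense $D$ of size $\le\kappa$), but the route from supercompactness to such a $D$ must be rebuilt from scratch along the lines of \cite{Bell}.
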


In this theorem $nw(U)$ stands for the network weight of $U$.
The best known positive result on hereditarily supercompact spaces can be derived combining the results of Bula, Nikiel, Tuncali, Tymchatyn \cite{BNTT} and Rudin \cite{Rudin} on monotonically normal compact spaces.

We recall that a topological space $X$ is called {\em monotonically normal} if there is an operator $G$ assigning to each pair $(A,B)$ of disjoint closed subsets of $X$ an open set $G(A,B)\subset X$ such that
\begin{itemize}
\item $A\subset G(A,B)\subseteq \cl G(A,B)\subseteq X\setminus B$;
\item $G(A,B)\subseteq G(A',B')$ for any pairs $(A,B)$, $(A',B')$ of disjoint closed sets with $A\subseteq A'$ and $B'\subseteq B$.
\end{itemize}
By a deep result of M.~E.~Rudin \cite{Rudin}, a compact space is monotonically normal if and only if it is a continuous image of a linearly ordered compact space. Another deep result of Bula, Nikiel, Tuncali and Tymchatyn \cite{BNTT} says that each continuous image of a linearly ordered compact space is supercompact. Combining these two results, we get the following theorem.

\begin{theorem}\label{monotone} Each monotonically normal compact space is hereditarily supercompact.\qed
\end{theorem}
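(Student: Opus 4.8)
The plan is to deduce the hereditary statement from the non-hereditary one already assembled from the theorems of Rudin and of Bula--Nikiel--Tuncali--Tymchatyn. Let $X$ be a monotonically normal compact space and let $Z\subseteq X$ be an arbitrary closed subspace. Being closed in the compact space $X$, the space $Z$ is itself compact, so the whole task is to show that $Z$ is supercompact; and for this it suffices to check that $Z$ inherits monotone normality from $X$.

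The key step is therefore the heredity of monotone normality: every subspace of a monotonically normal space is again monotonically normal. This is most transparent in the equivalent point--open reformulation, in which an operator $H$ assigns to each pair $(x,U)$ with $x\in U$ open an open set $H(x,U)$ satisfying $x\in H(x,U)\subseteq U$, the monotonicity $H(x,U)\subseteq H(x,V)$ whenever $U\subseteq V$, and the compatibility clause that $H(x,U)\cap H(y,V)\neq\emptyset$ forces $x\in V$ or $y\in U$. Given such an operator on $X$, I would define an operator on $Z$ by $H_Z(x,U)=Z\cap H(x,U^{*})$, where $U^{*}=X\setminus\cl(Z\setminus U)$ is the largest open subset of $X$ whose trace on $Z$ equals the relatively open set $U$. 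One then checks that $U^{*}\cap Z=U$, that the assignment $U\mapsto U^{*}$ is monotone, and that the compatibility clause for $H$ descends to $H_Z$ (using $x,y\in Z$ together with $U^{*}\cap Z=U$ and $V^{*}\cap Z=V$); all of this is routine.

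With heredity in hand, the remaining two steps are pure invocations of the cited theorems. Since $Z$ is a monotonically normal compact space, Rudin's theorem exhibits $Z$ as a continuous image of a linearly ordered compact space, and the theorem of Bula--Nikiel--Tuncali--Tymchatyn then yields that $Z$ is supercompact. As $Z$ was an arbitrary closed subspace of $X$, this proves that $X$ is hereditarily supercompact.

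The genuine mathematical content of the result lies entirely in the two deep cited theorems, which I am taking as given; the only part that has to be supplied here is the heredity of monotone normality, and that is consequently the sole obstacle. The one point deserving care in it is the passage from relatively open sets of $Z$ to honest open sets of $X$ on which $H$ is defined, together with the verification that the monotonicity and compatibility conditions survive this passage intact --- which is exactly the ingredient that upgrades ``supercompact'' to ``hereditarily supercompact''.
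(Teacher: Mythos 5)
Your proof is correct and takes essentially the same route the paper intends: the theorem is stated with a \qed precisely because it is the combination of Rudin's theorem and the Bula--Nikiel--Tuncali--Tymchatyn theorem applied to a closed subspace $Z$, using the standard fact that monotone normality is hereditary. Your verification of that heredity via the Heath--Lutzer--Zenor point--open operator with $U^{*}=X\setminus\cl(Z\setminus U)$ is sound (and in fact works for arbitrary, not just closed, subspaces), so you have merely made explicit a step the paper leaves implicit.
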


It is known (see \cite{Ost}, \cite{Rudin}) that each monotonically normal space $X$ is {\em sub-hereditarily separable} in the sense that each separable subspace of $X$ is hereditarily separable. Sub-hereditarily separable spaces were introduced and studied by D.~Daniel and M.~Tuncali in \cite{DT}.
The sub-hereditary separability of monotonically normal spaces and Theorem~\ref{monotone} motivate the following:

\begin{problem} Is each hereditarily supercompact space sub-hereditarily separable?
\end{problem}

We shall give an affirmative answer to this problem under the Set-Theoretic assumption $\w_1<\mathfrak p$. Here $\mathfrak p$ is the smallest cardinality of a base of a free filter $\F$ on a countable set $X$, which has no infinite pseudo-intersection. An infinite subset $I\subseteq X$ is called a {\em pseudo-intersection} of the filter $\F$ is $I\setminus F$ is finite for each set $F\in\F$.
The following lemma follows directly from the definition of the cardinal $\mathfrak p$, see \cite[\S6]{vD}.

\begin{lemma}\label{lem:p} Let  $A$ be a countable subset of a topological space $X$. If a point  $a\in\cl A$ has character $\chi(a,X)<\mathfrak p$ in $X$ then $a$ is the limit of some sequence $\{a_n\}_{n\in\w}\subseteq A$.
\end{lemma}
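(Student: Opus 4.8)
The plan is to extract a convergent sequence from an infinite pseudo-intersection of a suitable filter on $A$, invoking the defining property of $\mathfrak p$: every free filter on a countable set that admits a base of cardinality strictly less than $\mathfrak p$ possesses an infinite pseudo-intersection. So the whole proof is essentially a translation of ``pseudo-intersection'' into ``convergent sequence''.

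First I would dispose of a degenerate case. If some neighborhood $U$ of $a$ meets $A$ in a finite set, then, $X$ being $T_1$, the finitely many points of $(U\cap A)\setminus\{a\}$ form a closed set, and deleting them from $U$ yields a neighborhood of $a$ whose intersection with $A$ is contained in $\{a\}$; since $a\in\cl A$, this forces $a\in A$, and the constant sequence $a_n=a$ witnesses the conclusion. Hence I may assume that every neighborhood of $a$ meets $A\setminus\{a\}$, and therefore (again by $T_1$-ness) meets the countable set $A^*:=A\setminus\{a\}$ in an \emph{infinite} set.

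Now I would fix a neighborhood base $\{U_\alpha:\alpha<\kappa\}$ at $a$ with $\kappa=\chi(a,X)<\mathfrak p$ and consider the family $\mathcal B=\{U_\alpha\cap A^*:\alpha<\kappa\}$ of infinite subsets of $A^*$. Since $\{U_\alpha\}$ is a neighborhood base, for any finite $F\subseteq\kappa$ the set $\bigcap_{\alpha\in F}U_\alpha$ is a neighborhood of $a$ and thus contains some $U_\beta$; this simultaneously shows that $\mathcal B$ is a filter base and that all its members are infinite (hence non-empty). The generated filter $\F$ on $A^*$ is free, because for every $x\in A^*$ the Hausdorff property supplies a neighborhood of $a$ missing $x$, whence some $U_\alpha$ with $x\notin U_\alpha\cap A^*$; thus $\bigcap\F=\emptyset$. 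The base $\mathcal B$ has cardinality $\le\kappa<\mathfrak p$, so by the definition of $\mathfrak p$ the free filter $\F$ admits an infinite pseudo-intersection $I=\{a_n:n\in\w\}\subseteq A^*$, which I enumerate injectively.

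It remains to check convergence. By the pseudo-intersection property, $I\setminus U_\alpha=I\setminus(U_\alpha\cap A^*)$ is finite for every $\alpha<\kappa$; so given an arbitrary neighborhood $V$ of $a$ and choosing $U_\alpha\subseteq V$, all but finitely many $a_n$ lie in $U_\alpha\subseteq V$, i.e.\ $a=\lim_{n\to\infty}a_n$. The argument is short, and the only genuine care is bookkeeping: one must ensure that the filter $\F$ is at once free and made up of infinite sets before citing $\mathfrak p$. This is exactly where the degenerate case had to be removed and where the $T_1$/Hausdorff separation of $X$ enters, guaranteeing infinite membership and freeness respectively; everything else is immediate.
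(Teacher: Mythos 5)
Your proof is correct, and it is exactly the argument the paper has in mind: the paper offers no proof at all, remarking only that the lemma ``follows directly from the definition of the cardinal $\mathfrak p$'' (citing van Douwen's survey), and your writeup is the standard unpacking of that remark --- trace a neighborhood base of size $\chi(a,X)<\mathfrak p$ onto $A\setminus\{a\}$, check the resulting filter is free with all basic sets infinite (your degenerate-case reduction and the use of $T_1$/Hausdorff, available since the paper assumes all spaces regular, are the right bookkeeping), and convert an infinite pseudo-intersection into a convergent sequence. No gaps; this is the intended proof, just written out in full.
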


\begin{theorem}\label{shs=hs} Under $\omega_1<\mathfrak p$ each hereditarily supercompact space is sub-hereditarily separable.
\end{theorem}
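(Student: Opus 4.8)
The plan is to reduce the statement to a single closed subspace, and then to contradict the existence of an uncountable left-separated subspace by confronting the binary $k$-network against the assumption $\w_1<\mathfrak p$. First I would take a separable subspace $S$ of a hereditarily supercompact space $X$ and pass to its closure $Z=\cl(S)$. Being closed in $X$, the space $Z$ is supercompact (indeed hereditarily supercompact), and a countable dense set $A\subseteq S$ remains dense in $Z$, so $Z$ is separable. Since hereditary separability passes to subspaces and $S\subseteq Z$, it suffices to prove that $Z$ is hereditarily separable. Recalling that a regular space is hereditarily separable if and only if it contains no uncountable left-separated subspace, I assume towards a contradiction that $\{x_\alpha:\alpha<\w_1\}\subseteq Z$ is left-separated, i.e.\ $x_\alpha\notin\cl\{x_\beta:\beta<\alpha\}$ for every $\alpha$.

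Next I would localize using a binary closed $k$-network. Fix such a network $\K$ for $Z$ via Theorem~\ref{k-net}. For each $\alpha$ choose an open $U_\alpha\ni x_\alpha$ disjoint from $\{x_\beta:\beta<\alpha\}$ and, by Lemma~\ref{lm1}, a single member $F_\alpha\in\K$ with $x_\alpha\in\cl(\Int F_\alpha)$ and $F_\alpha\subseteq U_\alpha$ (any one element of the finite family produced by the lemma works). Setting $E_\alpha=A\cap\Int F_\alpha$, density of $A$ makes $E_\alpha$ an infinite subset of the fixed countable set $A$ with $x_\alpha\in\cl E_\alpha$, while $\cl E_\alpha\subseteq F_\alpha\subseteq U_\alpha$ forces $x_\beta\notin\cl E_\alpha$ for all $\beta<\alpha$. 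Thus the left-separated structure is faithfully recorded by an $\w_1$-indexed family $\{E_\alpha\}$ of infinite subsets of a single countable set, together with the $\K$-witnesses $F_\alpha$.

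The decisive step is to convert this configuration into a contradiction through $\w_1<\mathfrak p$. The mechanism is Lemma~\ref{lem:p}: a point of character $<\mathfrak p$ lying in the closure of a countable set is a sequential limit from it. Applying it to $x_\alpha\in\cl E_\alpha$, I would replace each $E_\alpha$ by a genuine sequence $s_\alpha\subseteq E_\alpha$ with $s_\alpha\to x_\alpha$, whereupon the relation $x_\beta\notin\cl E_\alpha$ for $\beta<\alpha$ turns into the almost-disjointness statement that $s_\beta$ meets $A\setminus E_\alpha$ infinitely. Feeding the $\w_1$ subsets into the definition of $\mathfrak p$ (a free filter on the countable set $A$ generated by $\le\w_1$ sets has an infinite pseudo-intersection), I would extract a single coherent sequence forcing two of the distinct limits $x_\alpha$ to coincide, the desired contradiction.

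The main obstacle is exactly the application of Lemma~\ref{lem:p}: it demands that the relevant points $x_\alpha$ (or, after thinning, a single well-chosen cluster point of the transfinite sequence) have character $<\mathfrak p$, whereas a priori a separable supercompact space may carry points of character up to $\mathfrak c$. I expect to secure the bound $\chi(x_\alpha,Z)\le\w_1$ from hereditary supercompactness: every closed subspace of $Z$ is again supercompact, so Bell's Theorem~\ref{bell} constrains the density of complements of dense open sets in those subspaces and should exclude the large-character points that dyadic examples such as $\{0,1\}^{\mathfrak c}$ exhibit (in keeping with Theorem~\ref{dyadic}). The second delicate point is arranging the family $\{E_\alpha\}$ so that the definition of $\mathfrak p$ can actually be invoked: here I expect the \emph{binary} property of $\K$—that a linked subfamily has nonempty intersection—to be what upgrades the pairwise compatibility of the localizing sets $F_\alpha$ into the strong finite intersection property of a free filter on $A$.
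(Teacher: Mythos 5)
Your reduction to $Z=\cl S$ and the extraction of a left-separated $\w_1$-sequence matches the paper's opening, but the core of your plan has two genuine gaps, both located exactly at the points you yourself flag as delicate. First, Lemma~\ref{lem:p} requires $\chi(x_\alpha,Z)<\mathfrak p$, and you only \emph{expect} to derive such a character bound from hereditary supercompactness via Bell's Theorem~\ref{bell}; no derivation is given, and none is apparent: Bell's theorem bounds the density of the complement of a dense open set by its network weight, which says nothing about the character of individual points, so a priori a separable hereditarily supercompact space may carry points of character up to $\mathfrak c$. The paper sidesteps this issue entirely by a reflection step your outline lacks: it maps $\cl Z$ continuously onto a compact space $Y$ of weight $\w_1$ in which the images $f(z_\alpha)$ remain left separated, and then invokes Lemma~\ref{lem:p} not at points of $Y$ at all, but in the hyperspace $\exp(Y)$, which has weight $\w_1<\mathfrak p$, so every point of it automatically has character $<\mathfrak p$.

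Second, your endgame does not close. The sets $E_\alpha=A\cap\Int(F_\alpha)$ need not have the strong finite intersection property, so they do not generate a free filter on $A$ and the definition of $\mathfrak p$ cannot be applied to them: the binary property of $\K$ only guarantees common points for \emph{linked} subfamilies, whereas your $F_\alpha\subseteq U_\alpha$ are chosen inside neighborhoods avoiding all earlier $x_\beta$, and large subfamilies of them can be pairwise disjoint or at least fail to be centered (separability gives only ccc, and $\w_1<\mathfrak p$ alone does not upgrade an uncountable family of open sets to an uncountable centered subfamily). Moreover, even granting a pseudo-intersection $I$ of $\w_1$ many sets $E_\alpha$, its cluster points merely lie in $\bigcap_\alpha F_\alpha$, which is perfectly consistent with all the $x_\alpha$ being distinct; nothing forces two limits to coincide. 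The paper's contradiction has a different shape: it uses $\w_1<\mathfrak p$ exactly once, in $\exp(Y)$, to produce finite sets $F_n\subseteq Q$ converging in the Vietoris topology to a closed non-separable set $F$, forms the compactum $K=F\cup\bigcup_{n\in\w}F_n$ whose complement of $F$ is a countable dense open set, and then applies Bell's Theorem~\ref{bell} to $K$ --- legitimately, because $K$ is a continuous image of the supercompact space $f^{-1}(K)$, and the closedness of $f^{-1}(K)$ in $\cl Z$ is precisely where hereditary (rather than plain) supercompactness enters --- concluding that $F$ is separable, a contradiction. That your outline never actually uses hereditary supercompactness in the argument proper (only in the speculative character-bound patch) is a further sign the mechanism is off: if your scheme worked it would prove the conclusion for all separable supercompact spaces from a single binary $k$-network on $Z$, a substantially stronger statement than the theorem.
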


\begin{proof} Given a hereditarily supercompact space $X$, we need to check that each separable subspace $Z$ of $X$ is hereditarily separable. Assume conversely that $Z$ is not hereditarily separable.
Then $Z$ contains a transfinite sequence $(z_\alpha)_{\alpha<\w_1}$ which is {\em left separated} in the sense that for every ordinal $\alpha<\w_1$ the point $z_\alpha$ does not belong to the closure $\cl Z_\alpha$ of the set $Z_\alpha=\{z_\beta\}_{\beta<\alpha}$ in $X$. Consider the closure $\cl Z$ of $Z$ in $X$ and take any map $f\colon \cl Z\to Y$ onto a compact space $Y$ of weight $\w_1$ such that $f(z_\alpha)\notin f(\cl Z_\alpha)$ for all $\alpha<\w_1$.
It follows that the transfinite sequence of points $y_\alpha=f(z_\alpha)$, $\alpha<\w_1$, is left separated and hence the space $Y$ is not hereditarily separable.
On the other hand, the space $Y$ is separable (being the continuous image of the separable space $\cl Z$). So, we can find a countable dense subset $Q\subseteq Y$. Since the compact space $Y$ is not hereditarily separable, it contains a closed non-separable subspace $F\subseteq Y$ according to \cite[3.12.9(d)]{En}. Replacing $F$ by the closure of the set $F\setminus \cl(F\cap Q)$, we can assume that the set $F\cap Q$ is nowhere dense in $F$ and in $Y$. Then the countable set $Q\setminus F$ is dense in $Y$. Replacing $Q$ by $Q\setminus F$, we can assume that $F\cap Q=\emptyset$.

Let $\exp(Y)$ be the space of all non-empty closed subsets of $Y$, endowed with the Vietoris topology.
The density of $Q$ in $Y$ implies that the countable set $[Q]^{<\w}$ of all non-empty finite subsets of $Q$ is dense in the hyperspace $\exp(Y)$. Since $\exp(Y)$ has weight $\w_1<\mathfrak p$, Lemma~\ref{lem:p} yields a sequence $F_n\in[Q]^{<\w}$, $n\in\w$, of finite subsets of $Q$, which converges to $F$ in the Vietoris topology of $\exp(Y)$. Then $K=F\cup\bigcup_{n\in\w}F_n$ is a compact subset of $Y$ and $\bigcup_{n\in\w}F_n=K\setminus F$ is a countable dense open subset of $K$.

Since the space $X$ is hereditarily supercompact, the preimage $f^{-1}(K)\subseteq \cl Z$ is supercompact and hence $K$ is a continuous image of a supercompact space. By Bell's Theorem~\ref{bell}, the closed subset $F=K\setminus \bigcup_{n\in\w}F_n$ is separable, which is a desired contradiction.
\end{proof}

\begin{corollary}\label{c:MACH} Under \textup{(MA$+\neg$CH)} each separable hereditarily supercompact space is hereditarily separable and hereditarily Lindel\"of.
\end{corollary}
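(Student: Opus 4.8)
The plan is to reduce the statement to two ingredients: Theorem~\ref{shs=hs} of the present paper together with the classical fact that \textup{(MA$+\neg$CH)} destroys all compact $S$-spaces. First I would record the set-theoretic reduction. Martin's Axiom implies $\mathfrak p=\mathfrak c$, while $\neg$CH gives $\mathfrak c>\w_1$; hence \textup{(MA$+\neg$CH)} yields $\w_1<\mathfrak p$, so the hypothesis of Theorem~\ref{shs=hs} is met and every hereditarily supercompact space is sub-hereditarily separable.

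Next, let $X$ be a separable hereditarily supercompact space. Being in particular supercompact (as $X$ is a closed subspace of itself), $X$ is compact by the Alexander Lemma. Since $X$ is separable and sub-hereditarily separable, applying the definition of sub-hereditary separability to $X$ itself immediately shows that $X$ is hereditarily separable; this already disposes of the first assertion. It remains to deduce hereditary Lindel\"ofness. Here I would invoke the theorem of Szentmikl\'ossy that under \textup{(MA$+\neg$CH)} every compact space of countable spread is hereditarily Lindel\"of (equivalently, there are no compact $S$-spaces). Since a hereditarily separable space trivially has countable spread, the compact hereditarily separable space $X$ has $s(X)=\w$, and Szentmikl\'ossy's result then gives that $X$ is hereditarily Lindel\"of.

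I expect the genuine content to lie entirely in this last cited step: the reductions above are all one-line verifications, whereas the passage from hereditary separability to hereditary Lindel\"ofness rests on the deep combinatorial fact that \textup{(MA$+\neg$CH)} eliminates compact $S$-spaces. This is the step I would cite rather than reprove. The only bookkeeping points to double-check are that ``hereditarily supercompact'' already forces compactness of $X$ (so that the $S$-space theorem applies), and that sub-hereditary separability is being used against the \emph{separable} space $X$, which is exactly the given hypothesis.
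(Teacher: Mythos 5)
Your proposal is correct and follows essentially the same route as the paper's proof: derive $\w_1<\mathfrak p$ from \textup{(MA$+\neg$CH)}, apply Theorem~\ref{shs=hs} to the separable space $X$ to obtain hereditary separability, and then invoke Szentmikl\'ossy's theorem to pass from compact hereditarily separable to hereditarily Lindel\"of. The only cosmetic difference is that you quote Szentmikl\'ossy's result in its countable-spread form, which subsumes the hereditarily separable case the paper cites directly.
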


\begin{proof} It is well-known that (MA+$\neg$CH) implies $\w_1<\mathfrak p=\mathfrak c$. By Theorem~\ref{shs=hs}, each separable hereditarily supercompact space $X$ is hereditarily separable. By a result of Szentmikl\'ossy \cite{Szent} (see also \cite[6.4]{Roit}), under (MA+$\neg$CH) each compact hereditarily separable space is hereditarily Lindel\"of. So, $X$ is hereditarily Lindel\"of.
\end{proof}

In the class of dyadic compacta the hereditary supercompactness is equivalent to the metrizability.
Let us recall that a compact space $X$ is {\em dyadic} if $X$ is a continuous image of the Cantor cube $\{0,1\}^\kappa$ for some cardinal $\kappa$.

\begin{theorem}\label{dyadic} A dyadic compact space is metrizable if and only if it is hereditarily supercompact.
\end{theorem}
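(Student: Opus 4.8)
The plan is to prove the two implications separately, with essentially all of the content lying in the reverse one. For the implication ``metrizable $\Rightarrow$ hereditarily supercompact'' dyadicity is not even needed: every closed subspace of a compact metrizable space is compact metrizable, and by the theorem of Strok and Szymanski \cite{SS} every compact metrizable space is supercompact, so $X$ is hereditarily supercompact. For the converse I would argue by contraposition, assuming that $X$ is a \emph{non-metrizable} dyadic compactum and producing a closed subspace of $X$ that is not supercompact. The detecting tool is Bell's Theorem~\ref{bell}: if a compact space $K$ has an open dense subset $U$ with $d(K\setminus U)>nw(U)$, then $K$ is not a continuous image of a supercompact space, and in particular is not supercompact. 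Thus it suffices to locate inside $X$ a closed subspace carrying such a pair.

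The candidate $K$ is the one-point compactification $\Psi^+$ of a Mrówka space of weight $\omega_1$. Fixing an almost disjoint family $\{A_\alpha:\alpha<\omega_1\}$ of infinite subsets of $\omega$ (which exists in ZFC), let $\Psi=\omega\cup\{A_\alpha:\alpha<\omega_1\}$, where the points of $\omega$ are isolated and $\{A_\alpha\}\cup(A_\alpha\setminus k)$, $k\in\omega$, is a neighbourhood base at $A_\alpha$, and let $\Psi^+=\Psi\cup\{\infty\}$. I would check the routine facts that $U=\omega$ is a countable open dense subset of $\Psi^+$, that the remainder $\Psi^+\setminus U=\{A_\alpha:\alpha<\omega_1\}\cup\{\infty\}$ is homeomorphic to the one-point compactification $\alpha D$ of the discrete space $D=\{A_\alpha:\alpha<\omega_1\}$ of cardinality $\omega_1$, and that $\Psi^+$ is a zero-dimensional compact space of weight $\omega_1$. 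Since $nw(U)=\omega$ while $d(\Psi^+\setminus U)=|D|=\omega_1$, Bell's Theorem~\ref{bell} shows that $\Psi^+$ is not a continuous image of a supercompact space; and being zero-dimensional of weight $\omega_1$, it embeds as a closed subspace of the Cantor cube $\{0,1\}^{\omega_1}$.

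It remains to plant a copy of $\{0,1\}^{\omega_1}$ inside $X$, and here I would invoke the classical reflection theorem for dyadic compacta (Efimov; see \cite[\S3.12]{En}): a dyadic compactum is metrizable if and only if it contains no topological copy of $\{0,1\}^{\omega_1}$. Hence the non-metrizable $X$ contains a (necessarily closed) copy of $\{0,1\}^{\omega_1}$, and therefore a closed copy of $\Psi^+$. Were $X$ hereditarily supercompact, this closed copy of $\Psi^+$ would be supercompact, contradicting the previous paragraph; so $X$ is not hereditarily supercompact. The one genuinely external step --- and the main obstacle --- is this last structural input, namely locating $\{0,1\}^{\omega_1}$ (equivalently, a closed copy of $\Psi^+$) inside an arbitrary non-metrizable dyadic compactum; the verification that $\Psi^+$ violates Bell's density inequality and its embedding into the Cantor cube are elementary.
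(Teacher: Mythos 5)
Your proof is correct, and its skeleton coincides with the paper's: both reduce the hard implication to the Efimov--Gerlits theorem that every non-metrizable dyadic compactum contains a copy of the Cantor cube $2^{\omega_1}$, and then plant inside that cube a zero-dimensional compactum of weight $\omega_1$ which is not supercompact. The difference lies in how that witness is produced. The paper simply cites van Mill's tract \cite[Sec.~1.1]{JvM} for the existence of a scattered non-supercompact compact space $Z$ of weight $\omega_1$, whereas you construct one explicitly: the one-point compactification $\Psi^+$ of a Mr\'owka space over an almost disjoint family of size $\omega_1$, shown non-supercompact by Bell's inequality $d(X\setminus U)\le nw(U)$ (Theorem~\ref{bell}) applied to the countable dense open set $U=\omega$, whose remainder is a copy of $\alpha D$ with $|D|=\omega_1$ and hence has density $\omega_1>\omega=nw(U)$. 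This buys self-containedness: Bell's theorem is already quoted in the paper, so your route replaces an external black box by a two-line computation (and your $\Psi^+$ is in fact scattered of height $3$, so it is a concrete instance of the cited $Z$). The routine verifications you defer do all hold: the family $\{A_\alpha:\alpha<\omega_1\}$ is closed and discrete in $\Psi$, so compact subsets of $\Psi$ meet it in finite sets and the remainder of $\Psi^+$ is indeed homeomorphic to $\alpha D$; moreover $\Psi^+$ is compact Hausdorff, zero-dimensional, of weight $\omega_1$, hence embeds (automatically as a closed subspace, by compactness) into $2^{\omega_1}$. One small bibliographic nit: the reflection theorem is better cited from the original sources of Gerlits \cite{Ger} and Efimov \cite{Efim}, as the paper does, rather than from \cite[\S 3.12]{En}, where it is not stated in the form you use.
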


\begin{proof} By a result of Gerlits \cite{Ger} or Efimov \cite{Efim}, each non-metrizable dyadic compact space $X$ contains a topological copy of the Cantor cube $2^{\w_1}$. By~\cite[Sec.~1.1]{JvM}, there exists a scattered compact space $Z$ of weight $\w_1$, which is not supercompact. Being zero-dimensional, the space $Z$ embeds into the Cantor cube $2^{\w_1}$ and hence embeds into the dyadic compact space $X$. Now we see that the space $X$ contains a non-supercompact closed subspace  and hence is not hereditarily supercompact.
\end{proof}

Now we present one construction preserving hereditarily supercompact spaces.

\begin{proposition}\label{p1} For any (hereditarily) supercompact spaces $X_i$, $i\in I$, the one-point compactification $\alpha X$ of the topological sum $X=\bigoplus_{i\in I}X_i$ is (hereditarily) supercompact.
\end{proposition}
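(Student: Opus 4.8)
The plan is to verify supercompactness through the characterization in Theorem~\ref{k-net}, i.e. to equip $\alpha X$ (and, for the hereditary statement, each of its closed subspaces) with a \emph{binary closed $k$-network}. Write $\infty$ for the point added to $X=\bigoplus_{i\in I}X_i$, and for a finite $E\subseteq I$ put $X_E=\bigcup_{i\in E}X_i$. Since each $X_i$ is compact, a subset of $X$ is compact precisely when it is closed and meets only finitely many summands, so the neighbourhoods of $\infty$ are exactly the sets $\{\infty\}\cup(X\setminus K)$ with $K$ compact, and each $\{\infty\}\cup(X\setminus X_E)$ is closed with $\infty$ in its interior. For the supercompact case I would fix a binary closed $k$-network $\K_i$ of each $X_i$ (which exists by Theorem~\ref{k-net}) and propose
\[
\K=\bigcup_{i\in I}\K_i\ \cup\ \big\{\{\infty\}\cup(X\setminus X_E): E\subseteq I\text{ finite}\big\}.
\]
Every $F\in\K_i$ is a closed subset of the clopen summand $X_i$, hence closed in $\alpha X$, so $\K$ consists of closed sets.

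To see that $\K$ is a $k$-network, I would fix a compact $K$ and an open $U\supseteq K$. If $\infty\notin K$, then $K$ lies in finitely many summands, and I would network each $K\cap X_i$ inside $U\cap X_i$ by $\K_i$ (Lemma~\ref{lm1}); as the summands are clopen these pieces assemble to the required finite subfamily. If $\infty\in K$, I would choose a basic neighbourhood $\{\infty\}\cup(X\setminus C)\subseteq U$ with $C$ compact, fix finite $E$ with $C\subseteq X_E$, and use the single set $\{\infty\}\cup(X\setminus X_E)\in\K$ to cover $\infty$ together with all of $K$ outside $X_E$; the remainder $K\cap X_E$ is compact and is networked inside $U$ by the previous case. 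The heart of the matter is binarity. Let $\mathcal L\subseteq\K$ be finite and linked. If every member contains $\infty$, then $\infty\in\bigcap\mathcal L$. Otherwise $\mathcal L$ contains some $F\subseteq X_i$ from $\K_i$; since distinct summands are disjoint, all members of $\mathcal L$ coming from the $\K_j$ must come from this one $\K_i$, and a set $\{\infty\}\cup(X\setminus X_E)$ can meet $F\subseteq X_i$ only when $i\notin E$, in which case it contains all of $X_i$ and hence every such $F$. Thus $\bigcap\mathcal L$ equals the intersection of the members of $\mathcal L\cap\K_i$, which is non-empty because $\K_i$ is binary. Hence $\K$ is a binary closed $k$-network and $\alpha X$ is supercompact.

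For the hereditary statement I would take a closed $Z\subseteq\alpha X$ and set $Z_i=Z\cap X_i$, a closed and therefore supercompact subspace of $X_i$. If $\infty\notin Z$, then $Z$ is a finite topological sum of the supercompact spaces $Z_i$, and I would use the network $\bigcup_i\K_i'$ (with $\K_i'$ a binary closed $k$-network of $Z_i$): it is a $k$-network because the summands are clopen, and it is binary because a linked family cannot meet two disjoint summands and so lies inside a single $\K_i'$. If $\infty\in Z$, I would check that the subspace topology presents $Z$ as the one-point compactification of $\bigoplus_i Z_i$ — the neighbourhoods of $\infty$ in $Z$ are exactly the complements of compact subsets of $\bigoplus_i Z_i$ — and then invoke the already-proved supercompact case for the family $\{Z_i\}_{i\in I}$. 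In both cases $Z$ is supercompact, so $\alpha X$ is hereditarily supercompact.

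The step I expect to be the main obstacle is binarity, together with choosing $\K$ correctly. The natural temptation is to enlarge $\K$ by ``spanning'' sets that combine a proper closed piece of one summand with entire other summands; but such sets admit linked finite subfamilies with empty intersection, so they must be excluded. Keeping the two kinds of generators separate — the block networks $\K_i$ and the cofinite-block neighbourhoods $\{\infty\}\cup(X\setminus X_E)$ of $\infty$ — is precisely what forces every linked family either to live over a single summand or to share the point $\infty$, and this is the crux of the whole argument.
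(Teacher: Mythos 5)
Your proof is correct and takes essentially the same route as the paper: the identical binary closed $k$-network $\bigcup_{i\in I}\K_i\cup\big\{\alpha X\setminus\bigoplus_{i\in F}X_i: F\in[I]^{<\w}\big\}$ for the supercompact case, and for the hereditary case the same dichotomy on whether $\infty$ lies in the closed set, recognizing it as a finite sum of supercompact spaces or as the one-point compactification of $\bigoplus_{i\in I}Z_i$. You merely spell out the $k$-network and binarity verifications that the paper leaves to the reader.
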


\begin{proof} For every $i\in I$, fix a binary closed $k$-network $\K_i$ on the space $X_i$.
Then $$\K=\big\{\alpha X\setminus \textstyle{\bigoplus\limits_{i\in F}}X_i:F\in[I]^{<\w}\big\}\cup \bigcup_{i\in I}\K_i $$is a binary closed $k$-network witnessing that the space $\alpha X$ is supercompact.

Now assuming that each space $X_i$, $i\in I$, is hereditarily supercompact, we shall show that so is the space $\alpha X$. Fix any closed subset $Y\subset\alpha X$ and for every $i\in I$ consider the supercompact space $Y_i=Y\cap X_i$. If the compactifying point $\infty$ of $\alpha X$ does not belong to $Y$,  then all but finitely many spaces $Y_i$ are empty and $Y$ is supercompact, being a disjoint union of a finite family of supercompact spaces. If $\infty\in Y$, then $Y$ is supercompact, being the one-point compactification of the topological sum $\bigoplus_{i\in I}Y_i$ of supercompact spaces $Y_i$, $i\in I$.
\end{proof}

\begin{corollary}\label{onepoint} The one-point compactification $\alpha D$ of any discrete space $D$ is hereditarily supercompact.
\end{corollary}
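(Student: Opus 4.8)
The plan is to obtain this corollary as an immediate specialization of Proposition~\ref{p1}. First I would represent the discrete space $D$ as the topological sum $D=\bigoplus_{d\in D}\{d\}$ of its one-point subspaces. Each summand $\{d\}$ is trivially hereditarily supercompact: its only non-empty closed subspace is itself, and a one-point space is supercompact because its topology admits the subbase $\{\{d\}\}$, every cover by members of which has a subcover of at most two sets. (Equivalently, one may simply invoke the fact recalled in the Introduction that every compact metrizable space is supercompact, applied to the one-point space.)

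With the summands identified as the hereditarily supercompact spaces $X_d=\{d\}$, I would then observe that the one-point compactification $\alpha D$ coincides with the one-point compactification $\alpha X$ of the topological sum $X=\bigoplus_{d\in D}\{d\}$. Proposition~\ref{p1} now applies verbatim and delivers that $\alpha D$ is hereditarily supercompact. There is essentially no obstacle to overcome here: the entire substance of the argument has already been packaged into Proposition~\ref{p1}, and the only point requiring a (wholly routine) verification is the trivial hereditary supercompactness of a single point, after which the result is nothing more than the case of singleton summands.
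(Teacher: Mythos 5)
Your proposal is correct and matches the paper's intent exactly: the corollary is stated immediately after Proposition~\ref{p1} precisely because it is the special case where $D$ is the topological sum of singleton summands, each trivially hereditarily supercompact. Your only added step---verifying that a one-point space is supercompact---is routine and sound, so nothing further is needed.
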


A topological space $X$ is called {\em hereditarily paracompact} if each subspace of $X$ is paracompact.
In \cite{BL} it was proved that the class of scattered compact hereditarily paracompact spaces coincides with the smallest topological class $\A$ which contains the singleton and is closed with respect to taking the one-point compactification of a topological sum $\bigoplus_{i\in I}X_i$ of spaces from the class $\A$. This characterization combined with Proposition~\ref{p1} implies:

\begin{corollary} Each scattered compact hereditarily paracompact space is hereditarily supercompact.
\end{corollary}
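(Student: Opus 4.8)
The plan is to combine the two results invoked immediately before the statement. The final corollary asserts that every scattered compact hereditarily paracompact space is hereditarily supercompact, and the preceding paragraph hands me exactly the two ingredients I need: a structural characterization from \cite{BL} and the preservation Proposition~\ref{p1}.

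First I would recall the characterization from \cite{BL}: the class of scattered compact hereditarily paracompact spaces is precisely the smallest topological class $\A$ that contains the singleton and is closed under the operation of forming the one-point compactification $\alpha\big(\bigoplus_{i\in I}X_i\big)$ of a topological sum of members of $\A$. The strategy is then to prove by (transfinite/structural) induction on this generating procedure that every member of $\A$ is hereditarily supercompact. In other words, I would show that the subclass $\A'\subseteq\A$ consisting of the hereditarily supercompact members of $\A$ itself contains the singleton and is closed under the defining one-point-compactification-of-sum operation; by minimality of $\A$ this forces $\A'=\A$, which is exactly the desired conclusion.

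The base case is immediate: a singleton is metrizable compact, hence supercompact and trivially hereditarily supercompact. For the inductive step, suppose each $X_i$, $i\in I$, lies in $\A'$, i.e.\ each $X_i$ is hereditarily supercompact. Then Proposition~\ref{p1}, applied in its ``hereditarily'' form, yields at once that the one-point compactification $\alpha\big(\bigoplus_{i\in I}X_i\big)$ is again hereditarily supercompact, so it too lies in $\A'$. This shows $\A'$ is closed under the generating operation and contains the singleton, completing the induction.

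There is essentially no main obstacle here, since the hard work has already been done: the genuine content is split between the topological characterization of \cite{BL} and the preservation result in Proposition~\ref{p1}, and the corollary is just their formal juxtaposition. The only point demanding a little care is to phrase the argument as an induction over the inductive description of $\A$ rather than over individual spaces, and to make sure the ``hereditarily'' clause of Proposition~\ref{p1} is the one being invoked at the inductive step; once that is said, the proof is complete.
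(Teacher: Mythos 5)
Your proof is correct and follows exactly the route the paper intends: the corollary is stated there as an immediate consequence of the characterization from \cite{BL} combined with Proposition~\ref{p1}, and your minimality argument (the hereditarily supercompact members of $\A$ contain the singleton and are closed under the generating operation, hence exhaust $\A$) is precisely the standard way to make that combination rigorous.
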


The characterization of scattered hereditarily paracompact spaces given in \cite{BL} implies the following metrizability criterion:

\begin{corollary} A scattered compact space is metrizable if and only if it is separable and hereditarily paracompact.
\end{corollary}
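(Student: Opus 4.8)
\emph{Proof proposal.} The plan is to prove the two implications separately, with essentially all the content residing in the ``if'' part. For the ``only if'' direction, note that a metrizable compact space is second countable and hence separable, while every metrizable space is hereditarily paracompact (each of its subspaces is metrizable and metrizable spaces are paracompact by Stone's theorem); thus a metrizable scattered compact space is automatically separable and hereditarily paracompact.

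For the nontrivial ``if'' direction, let $X$ be a separable scattered compact hereditarily paracompact space. By the characterization of \cite{BL} recalled above, $X$ belongs to the inductively defined class $\A$. I would then prove, by well-founded induction on the rank of a space in $\A$, the statement: \emph{every separable member of $\A$ is second countable} (equivalently, by the Urysohn metrization theorem, metrizable, since these spaces are compact Hausdorff hence regular). The base case is the singleton, which is trivially second countable.

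For the inductive step, write $X=\alpha\big(\bigoplus_{i\in I}X_i\big)$ with each $X_i\in\A$ of smaller rank, and let $\infty$ be the compactifying point. First I would check that separability descends to the summands: since $\bigoplus_{i\in I}X_i$ is open in $X$, a countable dense subset of $X$ meets it in a countable dense set, so $\bigoplus_{i\in I}X_i$ is separable; as each nonempty $X_i$ is clopen in the sum, this forces $I$ to be countable and each $X_i$ to be separable. By the induction hypothesis every $X_i$ is second countable, with a countable base $\mathcal B_i$, so $\bigcup_{i\in I}\mathcal B_i$ is a countable base for the open subspace $\bigoplus_{i\in I}X_i$. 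It remains to produce a countable neighborhood base at $\infty$: here I would use that a compact subset of a topological sum, being covered by the clopen summands, meets only finitely many of them, so that the sets $\{\infty\}\cup\bigoplus_{i\in I\setminus F}X_i$ over finite $F\subseteq I$ form a neighborhood base at $\infty$; since $I$ is countable there are only countably many such $F$. Combining the two families yields a countable base for $X$, whence $X$ is second countable and therefore metrizable.

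The argument is largely routine; the only points requiring genuine care are setting up the well-founded induction over the class $\A$ correctly and verifying that separability of $\alpha\big(\bigoplus_{i\in I}X_i\big)$ indeed forces the index set $I$ to be countable. This countability of $I$ is exactly what makes the neighborhood base at $\infty$ countable, and so it is the crux of the whole implication.
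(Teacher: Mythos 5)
Your proof is correct and takes exactly the route the paper intends: the paper derives this corollary from the characterization in \cite{BL} of scattered compact hereditarily paracompact spaces as the smallest class $\A$ containing the singleton and closed under one-point compactifications of topological sums, leaving the details implicit, and your well-founded induction on rank (separability forces the index set $I$ to be countable, the summands inherit separability, and the sets $\{\infty\}\cup\bigoplus_{i\in I\setminus F}X_i$ for finite $F\subseteq I$ form a countable neighborhood base at $\infty$ because the summands are compact) is precisely the missing argument. All steps check out, including the crucial point you flag --- the countability of $I$ --- and the trivial ``only if'' direction via second countability and Stone's theorem.
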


The preceding two corollaries motivate the following problem.

\begin{problem} Is each separable scattered hereditarily supercompact space metrizable?
\end{problem}

This problem has an affirmative solution under (MA+$\neg$CH), which follows from Corollary~\ref{c:MACH} and the metrizability of hereditarily Lindel\"of (and hence first countable) scattered compact spaces \cite{Sema}:

\begin{theorem}\label{scat-met} Under \textup{(MA+$\neg$CH)} a scattered compact space is metrizable if and only if it is separable and hereditarily supercompact.
\end{theorem}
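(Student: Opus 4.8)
The plan is to prove the two implications separately, treating the forward direction as entirely routine and reducing the reverse direction to Corollary~\ref{c:MACH} together with the cited metrizability of hereditarily Lindel\"of scattered compacta. Note that the set-theoretic hypothesis (MA$+\neg$CH) is needed only for the reverse implication.

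For the forward implication, suppose $X$ is a metrizable compact space (scatteredness plays no role here). Being compact metric, $X$ is separable. Moreover every closed subspace $Z\subseteq X$ is again compact metrizable, and by the Strok--Szymanski theorem recalled in the introduction (see \cite{SS}) each compact metrizable space is supercompact; hence every closed subspace of $X$ is supercompact, i.e.\ $X$ is hereditarily supercompact.

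For the reverse implication, let $X$ be a separable scattered hereditarily supercompact space, and work under (MA$+\neg$CH). First I would apply Corollary~\ref{c:MACH} to conclude that $X$ is hereditarily Lindel\"of; this is exactly the point at which the hypothesis enters, through $\w_1<\mathfrak p=\mathfrak c$. It then remains to invoke \cite{Sema}: a hereditarily Lindel\"of scattered compact space is metrizable. If one wished to supply this last step directly, the natural route is a Cantor--Bendixson analysis. The set $I_\alpha$ of isolated points of the $\alpha$-th derivative $X^{(\alpha)}$ is a discrete subspace of $X$, so hereditary Lindel\"ofness forces each $I_\alpha$ to be countable. To bound the Cantor--Bendixson rank $\rho$ below $\w_1$, one considers the strictly increasing open sets $U_\alpha=X\setminus X^{(\alpha)}$ (strict as long as $X^{(\alpha)}\ne\emptyset$, since a nonempty scattered space has isolated points) and picks $x_\alpha\in U_{\alpha+1}\setminus U_\alpha$; the resulting subspace $\{x_\alpha\}$ is right-separated, and an uncountable right-separated subspace is not Lindel\"of, contradicting hereditary Lindel\"ofness. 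Hence $\rho<\w_1$ and $X=\bigcup_{\alpha<\rho}I_\alpha$ is countable, whence $X$ has countable network weight, so $w(X)=nw(X)\le\w$ and $X$ is metrizable.

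I expect essentially no obstacle of substance, since the genuine content of the theorem already resides in Corollary~\ref{c:MACH} and in the external metrizability result. The only care required is in the assembly: one must extract \emph{hereditary Lindel\"ofness} (not merely hereditary separability) from Corollary~\ref{c:MACH}, and then recognize that scatteredness is precisely what upgrades hereditary Lindel\"ofness to countability and thence to metrizability. Should one choose to reprove \cite{Sema} in place, the single delicate point is the chain-of-open-sets argument bounding the Cantor--Bendixson rank below $\w_1$; everything else is bookkeeping with the derivatives and the identity $w=nw$ for compact Hausdorff spaces.
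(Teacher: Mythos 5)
Your proof is correct and takes essentially the same route as the paper, which likewise deduces the nontrivial direction from Corollary~\ref{c:MACH} (hereditary Lindel\"ofness under MA$+\neg$CH) together with the metrizability of hereditarily Lindel\"of scattered compacta from \cite{Sema}, and the easy direction from the Strok--Szymanski theorem. Your optional in-line Cantor--Bendixson argument replacing the citation of \cite{Sema} is also sound (countable levels by hereditary Lindel\"ofness, rank below $\w_1$ via right-separated sequences, then countability and $w=nw$), so nothing is missing.
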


In ZFC we can prove the metrizability of scattered supercompact spaces which have countable scattered height. The {\em scattered height} of a scattered space $X$ is defined as follows. For a subset $A\subseteq X$ denote by $A^{(1)}$ the set of all non-isolated points of $A$. Let $X^{(0)}=X$ and for each ordinal $\alpha$ define the $\alpha$-th derived set $X^{(\alpha)}$ of $X$ by the recursive formula
$$X^{(\alpha)}=\bigcap_{\beta<\alpha}(X^{(\beta)})^{(1)}.$$
The {\em scattered height} of $X$ is the smallest ordinal $\alpha$ for which the $\alpha$-th derived set $X^{(\alpha)}$ is empty.

For a topological space $X$ by $[X]^{<\w}$ we denote the space of all non-empty finite subsets of $X$ endowed with the Vietoris topology. It is well-known that for a metrizable space $X$ the hyperspace $[X]^{<\w}$ is metrizable. We recall that a topological space $X$ has {\em countable tightness} if for any subset $A\subseteq X$ and a point $a\in\cl A$ in its closure there is a countable subset $B\subseteq A$ with $a\in\cl B$.

\begin{proposition}\label{scat} A scattered compact space $X$ is metrizable if and only if
\begin{enumerate}
\item $X$ is separable,
\item $X$ has countable scattered height or $[X]^{<\w}$ has countable tightness,
\item $X$ is a continuous image of a supercompact space.
\end{enumerate}
\end{proposition}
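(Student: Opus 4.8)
The plan is to reduce metrizability to countability and then to control the Cantor--Bendixson structure. First note that for a scattered compact space metrizability is equivalent to countability: a compact metrizable space is countable if and only if it is scattered, while conversely every countable compact Hausdorff space is metrizable (by the Mazurkiewicz--Sierpi\'nski theorem it is homeomorphic to a countable successor ordinal). This settles the ``only if'' part at once, since a metrizable scattered compactum is countable, hence separable, of countable scattered height, with metrizable hyperspace $[X]^{<\w}$ (so of countable tightness), and supercompact by Strok and Szyma\'nski (in particular a continuous image of a supercompact space). It therefore remains to prove that (1)--(3) force $X$ to be countable.

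The core of the ``if'' part is a transfinite induction showing that, under (1) and (3) alone, every derived set $X^{(\alpha)}$ with $\alpha<\w_1$ is separable and every level $L_\alpha=X^{(\alpha)}\setminus X^{(\alpha+1)}$ is countable. For $\alpha=0$ the set $L_0$ of isolated points is open and dense (as $X$ is scattered), hence contained in any dense set, in particular in a countable dense subset of $X$. For $1\le\alpha<\w_1$, assuming all earlier levels countable, the set $U_\alpha=X\setminus X^{(\alpha)}=\bigcup_{\beta<\alpha}L_\beta$ is a countable union of countable sets, hence countable, and it is open and dense (because $X^{(\alpha)}\subseteq X^{(1)}$ is nowhere dense). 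As $X$ is a continuous image of a supercompact space, Bell's Theorem~\ref{bell} applied to $U_\alpha$ gives $d(X^{(\alpha)})\le nw(U_\alpha)\le|U_\alpha|\le\aleph_0$, so $X^{(\alpha)}$ is separable; then its isolated points $L_\alpha$ form a dense discrete subset of a separable space and are countable. By compactness the scattered height is a successor and a limit height is impossible, so if $X^{(\w_1)}=\emptyset$ then the height is some ordinal $<\w_1$ and $X=\bigcup_{\alpha}L_\alpha$ is a countable union of countable levels, hence countable; if $X^{(\w_1)}\ne\emptyset$ then uncountably many levels are non-empty and $X$ is uncountable. Thus $X$ is countable if and only if $X^{(\w_1)}=\emptyset$, and this disposes of the alternative in (2) given by countable scattered height.

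It remains to treat the case where $[X]^{<\w}$ has countable tightness, where the goal becomes to show $X^{(\w_1)}=\emptyset$. Assume, for contradiction, that $x_*\in X^{(\w_1)}$; then $x_*\in\cl(L_\alpha)$ for every $\alpha<\w_1$, and by the previous paragraph each $L_\alpha$ is countable and each $X^{(\alpha)}$ is separable. Since a free sequence of length $\w_1$ (a transfinite sequence $(F_\alpha)$ with $\cl\{F_\beta:\beta<\alpha\}\cap\cl\{F_\beta:\beta\ge\alpha\}=\emptyset$ for all $\alpha$) forces tightness at least $\w_1$, it suffices to build such a sequence in $[X]^{<\w}$. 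Here I would use that $X$, being scattered compact, is zero-dimensional: if one can choose finite sets $F_\alpha$ together with clopen sets $G_\alpha\subseteq X$ such that $F_\beta\subseteq G_\alpha$ for all $\beta<\alpha$ and $F_\beta\cap G_\alpha=\emptyset$ for all $\beta\ge\alpha$, then the complementary Vietoris-clopen sets $\{F\in[X]^{<\w}:F\subseteq G_\alpha\}$ and $\{F\in[X]^{<\w}:F\cap G_\alpha=\emptyset\}$ separate the closures of the initial and final segments, so $(F_\alpha)_{\alpha<\w_1}$ is free. The intended construction produces the $F_\alpha$ (pairs suffice) and the $G_\alpha$ by recursion along the $\w_1$-tall filtration $\{X^{(\alpha)}\}_{\alpha<\w_1}$, using the separability of the levels and a fixed countable dense set to keep each $F_\alpha$ finite while capturing the accumulation of higher levels toward $x_*$.

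I expect this last construction to be the main obstacle. Because $X$ itself may have countable tightness, no free sequence of singletons exists, so the $F_\alpha$ must be genuine finite sets and the increase of tightness from $X$ to $[X]^{<\w}$ has to be manufactured from the tall scattered structure; this is precisely where all three hypotheses interact, since separability and the Bell-type input (through the conclusion of the previous paragraph that every $X^{(\alpha)}$ is separable with countable levels) are what make a recursion of length $\w_1$ feasible. The delicate bookkeeping will be to arrange the clopen sets $G_\alpha$ so that each newly chosen $F_\alpha$ is excluded from all earlier $G_\beta$ while entering $G_{\alpha+1}$, coherently through all countable limit stages. Should the direct free-sequence construction prove unwieldy, an alternative is to convert the countable-tightness hypothesis into a convergent sequence of finite sets approximating $X^{(\w_1)}$ and to derive a contradiction from Bell's Theorem~\ref{bell} applied to a suitable continuous image of $X$, in the spirit of the proof of Theorem~\ref{shs=hs}.
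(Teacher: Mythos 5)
Your ``only if'' argument and your handling of the countable-scattered-height alternative are correct and coincide with the paper's own proof: the transfinite induction showing each $X\setminus X^{(\beta)}$ is countable, with Bell's Theorem~\ref{bell} applied at successor steps to the countable open dense set $U=X\setminus X^{(\gamma)}$, is exactly the paper's argument. The genuine gap is the other half of condition (2): the case in which $[X]^{<\w}$ has countable tightness is never actually proved. You reduce it to showing $X^{(\w_1)}=\emptyset$ and propose to contradict countable tightness by a free sequence $(F_\alpha)_{\alpha<\w_1}$ of finite sets in $[X]^{<\w}$, but you leave the recursion unperformed and yourself flag it as ``the main obstacle,'' offering only a fallback you also do not execute. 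Worse, the construction as sketched uses hypothesis (3) only through the countability of the levels $L_\alpha$, and that cannot suffice: one-point compactifications of thin-tall locally compact scattered spaces (Ostaszewski-type spaces under $\diamondsuit$, or the Juh\'asz--Weiss ZFC examples) are separable scattered compacta of height $\ge\w_1$ with all levels countable, and nothing in your bookkeeping with the clopen sets $G_\alpha$ explains why a free sequence of length $\w_1$ in $[X]^{<\w}$ should exist for them; supercompactness must re-enter the argument in this case, and your sketch gives it no role. Your fallback is also flawed as stated: countable tightness of $[X]^{<\w}$ yields a countable family with the target set in its closure, not a convergent sequence of finite sets; in the paper's Theorem~\ref{shs=hs} the analogous extraction of a convergent sequence (Lemma~\ref{lem:p}) required the extra hypothesis $\w_1<\mathfrak p$, which is not available here.

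For comparison, the paper disposes of this case by citation rather than construction: by the theorem of van Mill and Mills \cite{vMM}, a separable space that is a continuous image of a supercompact space and whose hyperspace $[X]^{<\w}$ has countable tightness is first countable, and then Semadeni's theorem \cite{Sema} shows that a first countable scattered compact space is metrizable. Thus supercompactness is used twice --- once through Bell's Theorem~\ref{bell} in the height case (which you reproduce faithfully) and once through van Mill--Mills in the tightness case --- and it is precisely this second use that your proposal is missing; replacing your unfinished free-sequence construction by these two citations repairs the proof.
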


\begin{proof} If a scattered compact space $X$ is metrizable, then it is countable and hence is separable and has countable scattered height. Moreover, the hyperspace $\exp(X)$ is metrizable and hence $[X]^{<\w}\subseteq \exp(X)$ is metrizable and has countable tightness. By Strok-Szymanski Theorem \cite{SS}, the metrizable compact space $X$ is (hereditarily) supercompact.

Now assume that a separable scattered space $X$ is a continuous image of a supercompact space and $X$ has countable scattered height or $[X]^{<\w}$ has countable tightness.

We shall consider two cases separately.
First assume that the space $[X]^{<\w}$ has countable tightness. In this case we can apply Theorem of van Mill and Mills \cite{vMM} to conclude that the separable space $X$ if first countable. By \cite{Sema}, the first countable compact scattered space $X$ is metrizable.

Now assume that the scattered space $X$ has countable scattered height $\alpha$.
By transfinite induction we shall prove that for each countable ordinal $\beta\le \alpha$ the set $X\setminus X^{(\beta)}$ is  countable. This is trivial for $\beta=0$ and follows from the separability of $X$ for $\beta=1$. Assume that for some non-zero ordinal $\beta\le\alpha$ we have proved that the set $X\setminus X^{(\gamma)}$ is countable for all ordinals $\gamma<\beta$. If $\beta$ is a limit ordinal, then $$X\setminus X^{(\beta)}=\bigcup_{\gamma<\beta}X\setminus X^{(\gamma)}$$is countable as the countable union of countable sets.

Now assume that $\beta=\gamma+1$ is a successor ordinal. By the inductive assumption, the set $X\setminus X^{(\gamma)}$ is countable. Since the space $X$ is a continuous image of a supercompact space, and $U=X\setminus X^{(\gamma)}$ is an open dense countable subset of $X$, we get $d(X^{(\gamma)})\le nw(U)\le|U|$ according to Bell's Theorem~\ref{bell}. This implies that the set $X^{(\gamma)}\setminus X^{(\beta)}$ of isolated points of the scattered space $X^{(\gamma)}$ is countable and hence the set $X\setminus X^{(\beta)}=(X\setminus X^{(\gamma)})\cup(X^{(\gamma)}\setminus X^{(\beta)})$ is countable, which completes the inductive step.

At the final $\alpha$-th step of induction, we get that the space $X\setminus X^{(\alpha)}=X\setminus\emptyset=X$ is countable. Being a compact countable space, $X$ is metrizable.
\end{proof}

Proposition~\ref{scat} implies another metrization criterion of scattered compact spaces.

\begin{corollary}\label{corher} A scattered compact space $X$ is metrizable if and only if
$X$ is a continuous image of a supercompact space and all finite powers $X^n$, $n\in\w$, of $X$ are hereditarily separable.
\end{corollary}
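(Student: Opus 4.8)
The plan is to derive the corollary directly from Proposition~\ref{scat}, so that the whole task reduces to matching the hypotheses. For the ``only if'' direction, suppose the scattered compactum $X$ is metrizable. A scattered compact metrizable space cannot contain a copy of the Cantor set and is therefore countable; consequently $X$ is separable, and every finite power $X^n$ is again a countable metrizable, hence hereditarily separable, space. Finally, by the Strok--Szymanski Theorem \cite{SS} the metrizable compactum $X$ is supercompact and thus trivially a continuous image of a supercompact space. This settles one implication without difficulty.

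For the nontrivial ``if'' direction I would verify conditions (1)--(3) of Proposition~\ref{scat}. Condition (3) is literally among the hypotheses, and condition (1) is immediate, since $X=X^1$ is hereditarily separable and hence separable. The crux is condition (2): I would prove the stronger statement that the hyperspace $[X]^{<\w}$ is hereditarily separable, which in particular forces countable tightness. Indeed, hereditary separability always implies countable tightness: if $x\in\cl A$, then the subspace $A$ is separable, so it has a countable dense subset $D$, and then $x\in\cl A\subseteq\cl D$ with $D\subseteq A$ countable.

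To see that $[X]^{<\w}$ is hereditarily separable, I would consider for each $n\ge 1$ the finite-union map $u_n\colon X^n\to\exp(X)$, $u_n(x_1,\dots,x_n)=\{x_1,\dots,x_n\}$, which is continuous for the Vietoris topology and whose image is the space $[X]^{\le n}$ of all non-empty subsets of $X$ of cardinality at most $n$. As hereditary separability is preserved by continuous surjections (the preimage of any subspace is separable and maps onto it densely), each $[X]^{\le n}$ is hereditarily separable. Now $[X]^{<\w}=\bigcup_{n\ge 1}[X]^{\le n}$ is a countable union of hereditarily separable subspaces, and a routine argument---taking, for an arbitrary subspace $Z\subseteq[X]^{<\w}$, a countable dense set in each trace $Z\cap[X]^{\le n}$ and collecting them---shows that such a union is again hereditarily separable. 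Hence $[X]^{<\w}$ is hereditarily separable and so has countable tightness, establishing (2). With (1)--(3) in hand, Proposition~\ref{scat} yields the metrizability of $X$.

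The main obstacle is this middle step: passing from separability assumptions on the powers $X^n$ to countable tightness of $[X]^{<\w}$. The essential insight is that the maps $u_n$ link the powers of $X$ to the finite approximations living in the hyperspace, so that hereditary separability---being inherited both by continuous images and by countable unions---propagates from the spaces $X^n$ up to $[X]^{<\w}$, where it upgrades to the countable tightness required by Proposition~\ref{scat}.
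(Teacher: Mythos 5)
Your proof is correct and takes essentially the same route as the paper: both reduce the ``if'' direction to Proposition~\ref{scat} by showing that $[X]^{<\w}$ is hereditarily separable (hence of countable tightness) as a continuous image of the finite powers of $X$. The only cosmetic difference is that the paper maps the single topological sum $\bigoplus_{n\in\w}X^n$ onto $[X]^{<\w}$ in one step, whereas you map each $X^n$ onto $[X]^{\le n}$ and then assemble these pieces via the countable-union argument for hereditary separability.
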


\begin{proof} The ``only if'' part follows the supercompactness of compact metrizable  spaces. To prove the ``if'' part, assume that a compact scattered space $X$ is a continuous image of a supercompact space and each finite power $X^n$, $n\in\w$, of $X$ is hereditarily separable. Then the space $X$ is separable and the space $[X]^{<\w}$ is hereditarily separable, being the continuous image of the topological sum $\bigoplus_{n\in\w}X^n$ of hereditarily separable spaces $X^n$, $n\in\w$. Being hereditarily separable, the space $[X]^{<\w}$ has countable tightness. By Theorem~\ref{scat}, the scattered compact space $X$ is metrizable.
\end{proof}

Now we describe another construction preserving hereditarily supercompact spaces.

	\begin{proposition}\label{ext}  A compact topological space $X$ is (hereditarily) supercompact if $X$ contains a closed subspace $Z\subseteq X$ such that
	\begin{enumerate}
	\item $Z$ is (hereditarily) supercompact;
	\item $X\setminus Z$ is discrete;
	\item $Z$ is a retract in $X$.\label{ppp}
	\end{enumerate}
	\end{proposition}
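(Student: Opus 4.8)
The plan is to verify both cases through the criterion of Theorem~\ref{k-net}: I will build a binary closed $k$-network on the relevant space by pulling the network of the retract back along the retraction $r\colon X\to Z$ and decorating it with the singletons of the isolated points. Throughout, $I=X\setminus Z$ is a set of points isolated in $X$ (as $Z$ is closed and $X\setminus Z$ discrete).

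\textbf{The supercompact case.} Fix a binary closed $k$-network $\K_Z$ on $Z$ and set
\[
\K=\{\{d\}:d\in I\}\cup\{r^{-1}(K)\setminus F: K\in\K_Z,\ F\in[I]^{<\w}\cup\{\emptyset\}\}.
\]
All members are closed. For binarity I use that $r(z)=z$ for $z\in Z$, so every point of $K\cap Z$ lies in $r^{-1}(K)$ and, being outside $I$, survives every trimming by $F\subseteq I$; thus a linked subfamily without a singleton has pairwise meeting $K_i$, hence $\bigcap_iK_i\ne\emptyset$ by binarity of $\K_Z$, and any $z\in\bigcap_iK_i$ is a common point, while a subfamily containing a singleton $\{d\}$ has $d$ common. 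For the $k$-network property I apply Lemma~\ref{lm1}: isolated points are covered by singletons, and at $z\in Z$ I pull back a finite $\F_0\subseteq\K_Z$ with $z\in\Int(\bigcup\F_0)\subseteq\bigcup\F_0\subseteq U\cap Z$ and trim away the finitely many isolated points of the pullback lying outside $U$ (a compact set of isolated points, hence finite). Theorem~\ref{k-net} gives supercompactness.

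\textbf{The hereditary case.} Let $Y\subseteq X$ be closed; I must show $Y$ is supercompact. Put $Z_Y=Y\cap Z$ and $\tilde Z=r(Y)$. As $\tilde Z$ is a compact, hence closed, subset of $Z$, the hereditary supercompactness of $Z$ makes $\tilde Z$ supercompact; fix a binary closed $k$-network $\K_{\tilde Z}$ on $\tilde Z$. Partition the isolated points of $Y$ as $I_0=\{d:r(d)\in Z_Y\}$ and $I_1=\{d:r(d)\in\tilde Z\setminus Z_Y\}$, and take
\[
\K=\{\{d\}:d\in Y\cap I\}\cup\{(r|_Y)^{-1}(K)\setminus F:K\in\K_{\tilde Z},\ F\in[I_0]^{<\w}\cup\{\emptyset\}\},
\]
the crucial feature being that \emph{only $I_0$-points are ever trimmed}. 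The main obstacle is binarity, precisely because $r|_Y$ need not map $Y$ into $Z_Y$: a linked subfamily $\{(r|_Y)^{-1}(K_i)\setminus F_i\}$ still forces $P:=\bigcap_iK_i\ne\emptyset$, but $P$ may lie entirely in $\tilde Z\setminus Z_Y$, offering no common point inside $Z_Y$. This is exactly where the design pays off: any $y\in(r|_Y)^{-1}(P)$ then has $r(y)\in Z\setminus Y$, so $y\notin Z$ is isolated and lies in $I_1$; since every $F_i\subseteq I_0$, such a $y$ is trimmed by none of them and is a genuine common point. (If $P\cap Z_Y\ne\emptyset$, or a singleton is present, common points are found as before.)

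It remains to check the $k$-network property at $z\in Z_Y$ with neighborhood $U$, and here I must stop untrimmable stray $I_1$-points from spilling out of $U$. The key observation is $z\notin\cl\big(r(I_1\setminus U)\big)$: a net in $I_1\setminus U$ with $r$-images tending to $z$ would cluster in the compact set $Y\setminus U$ at a point mapping to $z\in Z_Y$, forcing that cluster point to equal $z$, which is absurd since $z\in U$. Hence I may pick an open $V\ni z$ in $\tilde Z$ with $\cl(V)\cap Z_Y\subseteq U$ and $V\cap r(I_1\setminus U)=\emptyset$; pulling back a finite $\K_{\tilde Z}$-cover $\F_0$ of a neighborhood of $z$ inside $V$ and trimming its stray set (finite, and now contained in $I_0$) yields $\F\subseteq\K$ with $z\in\Int(\bigcup\F)\subseteq\bigcup\F\subseteq U$. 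By the local characterization $\K$ is a $k$-network, and Theorem~\ref{k-net} completes the proof.
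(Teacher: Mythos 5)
Your proof is correct, and in the hereditary case it takes a genuinely different --- and, as it happens, necessary --- route. The paper's own proof fixes a binary closed $k$-network $\K$ on $Y\cap Z$ and asserts that the family $\{r^{-1}(K)\cap Y\setminus F:K\in\K,\;F\in[X\setminus Z]^{<\w}\}\cup\{\{y\}:y\in Y\setminus Z\}$ does the job; your non-hereditary case agrees with this (there $Y=X$ and $Y\cap Z=Z=r(Y)$, so the pullbacks of interiors really are neighborhoods). But for a general closed $Y$ the paper's family need not be a $k$-network, for exactly the reason you isolate: $r$ need not map $Y$ into $Y\cap Z$. Concretely, take $Z=[0,1]\times\{0\}$, $X=Z\cup\{(1/n,1/n):n\ge 1\}\subseteq\IR^2$ with $r$ the vertical projection, and $Y=\{(0,0)\}\cup\{(1/n,1/n):n\ge 1\}$: then $Y\cap Z=\{(0,0)\}$ and every set $r^{-1}(K)\cap Y$ with $K\subseteq Y\cap Z$ equals $\{(0,0)\}$, so the paper's family consists of singletons and cannot cover a compact neighborhood of $(0,0)$ in $Y$ by finitely many members. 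Your replacement of $Y\cap Z$ by $\tilde Z=r(Y)$ --- legitimate precisely because $Z$ is hereditarily supercompact, which is the hypothesis in force in this case --- repairs this; your $I_0/I_1$ partition with trimming confined to $I_0$ is exactly what salvages binarity (a stray point of $I_1$ over $\bigcap_i K_i$ serves as the common point when that intersection misses $Z_Y$), and the separation claim $z\notin\cl\big(r(I_1\setminus U)\big)$ is what salvages the $k$-network property at points of $Z_Y$. In short, your version buys a complete verification where the paper's ``interested reader can check'' sketch, read literally, breaks down for the hereditary statement, while the paper's version buys brevity and is adequate as written only when $r(Y)\subseteq Y$ (in particular for $Y=X$).

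Two harmless elisions you should fill in. First, in the net argument you pass from ``the net in $I_1\setminus U$ clusters at some $y\in Y\setminus U$ with $r(y)=z$'' directly to $y=z$; this is immediate if $y\in Z$ (then $y=r(y)=z$, contradicting $z\in U$), but if $y$ is isolated one needs the extra line that the net is then frequently equal to $y$, whence $r(y)=z\in Z_Y$ puts $y\in I_0$, contradicting that the net lies in $I_1$. Second, when you invoke Lemma~\ref{lm1} you only verify $z\in\Int(\bigcup\F)\subseteq\bigcup\F\subseteq U$ and not the clause $z\in\cl(\Int(F))$; this is fine, since the ``if'' direction of the lemma uses only the first condition (cover a compact set by the finitely many interiors and take a finite subcover), but it is worth saying explicitly.
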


\begin{proof} It suffices to prove that a closed subspace $Y$ of $X$ is supercompact provided the intersection $Y\cap Z$ is supercompact. By Theorem~\ref{k-net}, the supercompact space $Y\cap Z$ possesses a binary closed $k$-network $\K$. By our assumption, there is a retraction $r\colon X\to Z$.  The interested reader
can check that the family
$$\K_Y=\{r^{-1}(K)\cap Y\setminus F:K\in\K,\;F\in[X\setminus Z]^{<\w}\}\cup\{\{y\}:y\in Y\setminus Z\}$$is a binary closed $k$-network witnessing the supercompactness of the space $Y$.
\end{proof}

\begin{remark} The last condition is essential in Proposition~\ref{ext}. Indeed, take any separable scattered compact space $X$ whose set $X'$ of non-isolated points is uncountable and has a unique non-isolated point. By Proposition~\ref{p1}, the space $X'$ is supercompact, being the one-point compactification of a discrete space. On the other hand, the scattered space $X$ has finite scattered height, is separable and non-metrizable. By Proposition~\ref{scat}, $X$ is not supercompact.
\end{remark}

\section{The hereditary supercompactness of spaces $K\times\alpha D$}

In this section we study the problem of hereditary supercompactness of products $K\times\alpha D$ where $K$ is a metrizable compact space $K$ and $\alpha D=\{\infty\}\cup D$ is the one-point compactification of a discrete space $D$. Here $\infty\notin D$ is the compactifying point of $\alpha D$. The topology of the space $\alpha D$ consists of all subsets $U\subset\alpha D$ such that either $\infty\notin U$ or $\alpha\setminus U$ is finite. So, for a finite $D$ the space $\alpha D$ also is finite and has cardinality $|\alpha D|=|D|+1$.

By Theorem~\ref{monotone} and Corollary~\ref{onepoint}, the spaces $K$ and $\alpha D$ both are hereditarily supercompact. Moreover, the spaces $K$, $\alpha D$ belong to many important classes of compact Hausdorff spaces. In particular, they are (uniform) Eberlein and Rosenthal compact.

Let us recall that a compact space $X$ is called
\begin{itemize}
\item ({\em uniform}) {\em Eberlein compact} if $X$ embeds into a (Hilbert) Banach space endowed with the weak topology;
\item {\em Rosenthal compact} if $X$ embeds into the space $B_1(P)\subseteq\IR^P$ of functions of the first Baire class on a Polish space $P$.
\end{itemize}

It is clear (and well-known) that the classes of (uniform) Eberlein compact and Rosenthal compact spaces are closed with respect to taking closed subspaces and countable products. Any Rosenthal compact space $K$ has cardinality $|K|\le\mathfrak c$ not less than the cardinality of continuum $\mathfrak c$.
The following proposition is easy (and well-known) and is written in sake of references in the subsequent text.

\begin{proposition}\label{p3.1} For any compact metrizable space $K$ and any discrete space $D$ of cardinality $|D|\le\mathfrak c$,  the space $K\times\alpha D$ is uniform Eberlein compact and Rosenthal compact.
Consequently, each closed subspace $X\subset K\times\alpha D$ is uniform Eberlein and Rosenthal compact.
\end{proposition}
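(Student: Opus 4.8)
The plan is to verify that each of the two factors $K$ and $\alpha D$ belongs to both classes, and then invoke the closure of the classes of uniform Eberlein and Rosenthal compact spaces under countable (in particular, two-factor) products, stated in the paragraph preceding the proposition; the final ``Consequently'' sentence follows immediately from closure under closed subspaces. Thus everything reduces to exhibiting concrete embeddings of $K$ and $\alpha D$ into suitable Hilbert spaces and function spaces.

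For the uniform Eberlein part, I would map $\alpha D$ into the Hilbert space $\ell^2(D)$ by sending each $d\in D$ to the corresponding unit basis vector $e_d$ and the point $\infty$ to $0$. Since $\langle x,e_d\rangle=x_d\to 0$ along any net in $D$ escaping to infinity, the net $(e_d)$ weakly converges to $0$, and one checks that this map is a homeomorphism onto its image in the weak topology; hence $\alpha D$ is uniform Eberlein regardless of $|D|$. For $K$, I would use that every compact metrizable space embeds into the Hilbert cube $\prod_{n\in\w}[0,2^{-n}]\subset\ell^2$, on which the product topology agrees with the norm topology and, by compactness, with the weak topology; thus $K$ is uniform Eberlein as well, and the product $K\times\alpha D$ is uniform Eberlein by closure under finite products.

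For the Rosenthal part, here the hypothesis $|D|\le\mathfrak c$ is used: I would identify $D$ with a set of distinct reals and define a map $\alpha D\to B_1(\IR)$ by $d\mapsto\chi_{\{d\}}$ and $\infty\mapsto 0$, where $\chi_{\{d\}}$ is the characteristic function of the singleton $\{d\}$. Each $\chi_{\{d\}}$ is upper semicontinuous, hence of the first Baire class, and $0\in B_1(\IR)$. The compact space $K$, being metrizable, embeds into $\IR^\w=B_1(\w)$ and is therefore Rosenthal compact, so the product is again Rosenthal compact by closure under countable products.

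The routine verifications are that the two maps are homeomorphic embeddings; the only delicate point, which I expect to be the main obstacle, is the Rosenthal embedding of $\alpha D$. One must show that the pointwise topology on $\{\chi_{\{d\}}:d\in D\}\cup\{0\}$ reproduces the topology of $\alpha D$: each $\chi_{\{d\}}$ is isolated because the basic open set $\{g:g(d)>1/2\}$ separates it from all other images, while $0$ is the unique accumulation point because a net $(\chi_{\{d_\alpha\}})$ converges pointwise to $0$ if and only if the indices $d_\alpha$ eventually leave every finite subset of $D$, i.e.\ $d_\alpha\to\infty$ in $\alpha D$. Granting these embeddings, the stated closure properties of the two classes finish the proof.
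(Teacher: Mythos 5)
Your proof is correct and is exactly the argument the paper has in mind: the paper gives no proof at all, calling the proposition ``easy (and well-known)'' and relying on the closure of the two classes under closed subspaces and countable products stated immediately before it, which is precisely what you invoke after exhibiting the standard embeddings --- $\alpha D$ as the weakly null orthonormal system $\{e_d:d\in D\}\cup\{0\}$ in $\ell^2(D)$ (where continuity at $\infty$ follows since $\{d:|f(d)|\ge\e\}$ is finite for each $f\in\ell^2(D)$, and injective continuous maps from compacta are embeddings), $K$ in the Hilbert cube, and $\alpha D$ into $B_1(\IR)$ via characteristic functions of singletons, the only step where $|D|\le\mathfrak c$ is used. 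Your verification of the pointwise topology on the image in $B_1(\IR)$ is also the right one, so nothing further is required.
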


Now we establish some less obvious properties of closed subspaces of products $K\times\alpha D$.

We shall say that an indexed family $\{X_i\}_{i\in D}$ of subsets of a space $K$ is
\begin{itemize}
\item {\em point-countable} if for each point $x\in K$ the set $\{i\in D\colon x\in X_i\}$ is at most countable;
\item {\em nwd-countable} if for each nowhere dense subset $F\subseteq K$ the family $\{i\in D\colon X_i\cap F\ne\emptyset\}$ is at most countable;
\item {\em weakly nwd-countable} if for each closed nowhere dense subset $F\subseteq K$ the set $\{i\in D\colon X_i\cap F$ is not open in $X_i\}$ is at most countable;
\end{itemize}
These notions relate as follows:
$$\mbox{point-countable $\Leftarrow$ nwd-countable $\Rightarrow$ weakly nwd-countable}.
$$

For a closed subset $X\subseteq K\times\alpha D$ and a point $i\in\alpha D$ let $X_i=\{x\in K\colon (x,i)\in X\}$ be the $i$-th section of the set $X$. It is clear that $X=\bigcup_{i\in\alpha D}X_i\times\{i\}$.

\begin{proposition}\label{pointc}
Let $K$ be a compact metrizable space and $\alpha D$ be the one-point compactification of a discrete space $D$. A closed subspace $X\subseteq K\times\alpha D$ is first countable if and only if the indexed family $\{X_i:i\in D\}$ is point-countable.
\end{proposition}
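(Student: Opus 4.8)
The plan is to analyze first countability point by point and reduce everything to the points of the form $(x,\infty)\in X$. First observe that each section $X_i$ with $i\in D$ is closed in $K$: indeed $X$ is closed and $K\times\{i\}$ is clopen in $K\times\alpha D$, so $X_i\times\{i\}=X\cap(K\times\{i\})$ is closed, whence $X_i$ is a closed (hence metrizable) subspace of $K$. Moreover, for $i\in D$ the set $X_i\times\{i\}$ is an \emph{open} neighborhood of every point $(x,i)$ in $X$, homeomorphic to the metrizable space $X_i$. Therefore every point $(x,i)$ with $i\in D$ automatically has a countable neighborhood base, and $X$ is first countable if and only if it is first countable at each point $(x,\infty)$ with $x\in X_\infty$.

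For the ``only if'' part I would argue by contraposition. Suppose $\{X_i:i\in D\}$ is not point-countable, so there is $x_0\in K$ with $S:=\{i\in D:x_0\in X_i\}$ uncountable. Every basic neighborhood $U\times(\alpha D\setminus F)$ of $(x_0,\infty)$ contains $(x_0,i)$ for all but finitely many $i\in S$, so $(x_0,\infty)$ lies in the closure of $\{(x_0,i):i\in S\}\subseteq X$ and hence in $X$. Now assume, for contradiction, that there is a countable neighborhood base $\{W_n\}_{n\in\w}$ at $(x_0,\infty)$, where each $W_n=X\cap(U_n\times(\alpha D\setminus F_n))$ with $U_n\ni x_0$ open and $F_n\subseteq D$ finite. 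Since $\bigcup_n F_n$ is countable, there is $i_0\in S\setminus\bigcup_n F_n$, and then $(x_0,i_0)\in W_n$ for every $n$. But $X\cap(K\times(\alpha D\setminus\{i_0\}))$ is an open neighborhood of $(x_0,\infty)$ missing $(x_0,i_0)$, so no $W_n$ can be contained in it --- a contradiction. Thus first countability forces point-countability.

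For the ``if'' part, assume $\{X_i:i\in D\}$ is point-countable and fix $x\in X_\infty$. Then $S_x:=\{i\in D:x\in X_i\}$ is countable, so $[S_x]^{<\w}$ is countable. Choosing a decreasing open neighborhood base $\{U_n\}_{n\in\w}$ of $x$ in $K$, I claim that $\{\,X\cap(U_n\times(\alpha D\setminus F)):n\in\w,\ F\in[S_x]^{<\w}\,\}$ is a countable neighborhood base at $(x,\infty)$. Given an arbitrary basic neighborhood $X\cap(U\times(\alpha D\setminus F_0))$, split $F_0$ into $F_0\cap S_x$ and $F_0\setminus S_x$; for each $i\in F_0\setminus S_x$ we have $x\notin X_i$, and since $X_i$ is closed some $U_{n(i)}$ misses $X_i$. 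Taking $n$ large enough that $U_n\subseteq U$ and $X_i\cap U_n=\emptyset$ for all $i\in F_0\setminus S_x$, and setting $F=F_0\cap S_x$, one checks directly that $X\cap(U_n\times(\alpha D\setminus F))\subseteq X\cap(U\times(\alpha D\setminus F_0))$.

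The main obstacle is precisely this last reduction in the ``if'' part: a priori the exclusion sets $F$ range over all finite subsets of the possibly uncountable set $D$, so naively there is no countable base. The key point is that only the indices in the countable set $S_x$ need to be excluded explicitly, because every section $X_i$ with $x\notin X_i$ is closed and hence avoids all sufficiently small $U_n$ --- such indices are removed ``for free'' by shrinking the $K$-coordinate. Point-countability is used exactly to guarantee that $S_x$ is countable; without it, as the ``only if'' part shows, infinitely many indices remain trapped in every candidate neighborhood.
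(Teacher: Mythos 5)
Your proof is correct, and while the overall skeleton matches the paper's (reduce to the points $(x,\infty)$; an uncountable fiber yields a copy of $\{x_0\}\times\alpha S$ inside $X$), the ``if'' direction takes a genuinely different route. The paper argues indirectly: with $E=\{i\in D:x\in X_i\}$ countable and $\mathcal B_x$ a countable base at $x$ in $K$, the singleton $\{(x,\infty)\}=X\cap\bigcap_{e\in E}\bigcap_{B\in\mathcal B_x}B\times(\alpha D\setminus\{e\})$ is a $G_\delta$-set in $X$, and then it invokes the standard fact that in a \emph{compact} Hausdorff space a $G_\delta$-point has countable character ($\psi=\chi$). You instead build a countable neighborhood base explicitly, $\{X\cap(U_n\times(\alpha D\setminus F)):n\in\w,\;F\subseteq S_x\text{ finite}\}$, the key observation being that each section $X_i$ is closed in $K$ (as $X$ is closed and $K\times\{i\}$ is clopen), so the indices $i\in F_0\setminus S_x$ are absorbed ``for free'' by shrinking the $K$-coordinate. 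What each approach buys: the paper's is shorter but essentially uses compactness of $X$ (the $\psi=\chi$ equivalence fails without it), whereas your construction never uses compactness of $K$, so your argument actually proves the proposition for an arbitrary metrizable $K$ and any closed $X\subseteq K\times\alpha D$. One cosmetic point: under the paper's convention that $[S_x]^{<\w}$ consists of \emph{non-empty} finite sets, you should also admit $F=\emptyset$ (i.e.\ the sets $X\cap(U_n\times\alpha D)$) to handle the case $F_0\cap S_x=\emptyset$ and the case $S_x=\emptyset$; this is a trivial adjustment. Your ``only if'' direction is the paper's one-line remark (that $X\supseteq\{x_0\}\times\alpha S$ with $S$ uncountable precludes first countability) unfolded into an explicit diagonalization over $\bigcup_n F_n$, which is fine.
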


\begin{proof} If the family $\{X_i:i\in D\}$ is not point countable, then for some point $x\in K$ the set $E=\{i\in D:x\in X_i\}$ is uncountable. In this case $X$ contains the space $\{x\}\times \alpha E$ and hence cannot be first countable.

Now assume that the family $\{X_i:i\in D\}$ is point countable. We need to show that the space $X$ is first countable at each point $(x,i)\in X$. This is clear if $i\ne\infty$. So, we assume that $i=\infty$. By our assumption the set $E=\{i\in D:x\in X_i\}$ is at most countable. Fix a countable base $\mathcal B_x$ of open neighborhoods of the point $x$ in the metrizable space $K$ and observe that the singleton $$\{x\}=\bigcap_{e\in E}\bigcap_{B\in\mathcal B_x}B\times(\alpha D\setminus\{e\})$$is a $G_\delta$-set in $X$. By the compactness of $X$, the space $X$ is first countable at $x$.
\end{proof}

\begin{theorem}\label{para} For a closed subspace $X$ of the product $K\times\alpha D$ of a metrizable compact space $K$ and the one-point compactification $\alpha D$ of a discrete space $D$, the following conditions are equivalent:
\begin{enumerate}
\item $X$ is hereditarily paracompact;
\item  $X$ is hereditarily normal;
\item the indexed family $\{X_i\colon i\in D\}$ is weakly nwd-countable.
\end{enumerate}
\end{theorem}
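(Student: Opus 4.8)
The plan is to establish the cycle $(1)\Rightarrow(2)\Rightarrow(3)\Rightarrow(1)$. The implication $(1)\Rightarrow(2)$ is immediate: if $X$ is hereditarily paracompact, then every subspace of $X$ is paracompact and (being regular) normal, so $X$ is hereditarily normal.

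For $(2)\Rightarrow(3)$ I would argue by contraposition, so assume $\{X_i:i\in D\}$ is not weakly nwd-countable and produce a non-normal subspace. Fix a closed nowhere dense set $F\subseteq K$ and an uncountable set $D'\subseteq D$ such that $X_i\cap F$ is not open in $X_i$ for each $i\in D'$; for every such $i$ choose a \emph{bad point} $x_i\in (X_i\cap F)\cap\cl_{X_i}(X_i\setminus F)$ together with points $x_i^n\in X_i\setminus F$ with $x_i^n\to x_i$ (possible since $K$ is metrizable). As $F$ is second countable, the uncountable set $\{x_i:i\in D'\}$ has a condensation point $x_*\in F$, and one checks $(x_*,\infty)\in X$. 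The first key step is a covering lemma: the off-$F$ points of the \emph{infinity fiber} accumulate to $x_*$, i.e.\ $x_*\in\cl\big((X_\infty\setminus F)\times\{\infty\}\big)$. Indeed, if some punctured ball $B(x_*,\e)\setminus F$ met $X_\infty$ only inside $F$, then each point of the open second-countable set $B(x_*,\e)\setminus F$ would have a neighbourhood meeting only finitely many fibers; a Lindel\"of subcover would force only countably many fibers to meet $B(x_*,\e)\setminus F$, contradicting that uncountably many $x_i^n$ lie there. Having this, I would consider inside the open subspace $X\setminus\{(x_*,\infty)\}$ the \emph{spine} $A=\{(x_i,i):i\in D'\}$ (on $F$, in finite fibers) and a set $B$ of off-$F$ infinity points accumulating to $(x_*,\infty)$, verify they are disjoint and closed there, and derive a contradiction from a putative separation $A\subseteq U$, $B\subseteq V$ by neighbourhoods. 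Each $(x_i,i)$ forces a fiberwise neighbourhood $W_i\times\{i\}\subseteq U$ of $x_i$, and each point of $B$ forces a band $O\times(\alpha D\setminus G)\subseteq V$ with $G$ finite; the contradiction comes from the low off-$F$ points $x_i^n$ of the fibers landing simultaneously in $U$ and in some band of $V$. \textbf{The main obstacle is exactly this coordination:} the fiber neighbourhoods $W_i$ and the infinity bands $O$ are both small near $x_*$ but need not align, so one must choose $B$ as \emph{all} off-$F$ infinity points near $x_*$ (so that the bands capture every fiber's low points, which condense onto infinity-fiber values) and combine the finiteness of each $G$ with an uncountable set of fibers on which the $W_i$ are bounded below, to locate a genuine conflict; arranging that $A$ and $B$ are honestly disjoint-closed after restricting to a small neighbourhood of $x_*$ is the delicate bookkeeping.

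For $(3)\Rightarrow(1)$ I would use the decomposition $X=M\cup Z$, where $M=\bigoplus_{i\in D}X_i\times\{i\}$ is an open, metrizable (hence paracompact) subspace and $Z=X_\infty\times\{\infty\}$ is a closed, metrizable subspace. Given an arbitrary subspace $S\subseteq X$ and an open cover, I would first refine the cover on the metrizable closed trace $S\cap Z$ to a locally finite family, extend these sets to open sets of $S$ covering a neighbourhood of $S\cap Z$, and cover the remainder $S\setminus Z\subseteq M$ using paracompactness of the metric part. Here weak nwd-countability enters decisively: the boundaries produced by the extensions are nowhere dense (spheres $\partial B(x,\e)$ in $K$), and the hypothesis guarantees that all but countably many fibers meet each such boundary \emph{cleanly} (so that each fiber sees the cover as a clopen patchwork), which is what makes the refinement locally finite at the infinity points. \textbf{The hard part here is the local finiteness at points of $S\cap Z$:} an extension reaches into cofinitely many fibers, so one must show the extensions pile up only finitely often at each infinity point, treating the countably many exceptional fibers separately by metrizability. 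I expect the $(2)\Rightarrow(3)$ direction to be the genuinely subtle one, since $(3)\Rightarrow(1)$ reduces to a metric-paracompactness bookkeeping once the clean-cutting of fibers is in hand.
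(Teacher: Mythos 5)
Your implication $(1)\Rightarrow(2)$ and your choice of contrapositive for $(2)\Rightarrow(3)$ match the paper, and your covering lemma (that $x_*\in\cl(X_\infty\setminus F)$) is correct; but the construction it feeds into has a genuine gap exactly at the point you dismiss as ``delicate bookkeeping.'' In $X\setminus\{(x_*,\infty)\}$ neither of your two sets is closed. The closure of the spine $A=\{(x_i,i):i\in D'\}$ contains $(x,\infty)$ for \emph{every} accumulation point $x$ of the uncountable set $\{x_i:i\in D'\}\subseteq F$, and an uncountable subset of a compact metric space has uncountably many condensation points (a set with a single condensation point would be countable), so deleting the one point $(x_*,\infty)$ leaves uncountably many limit points of $A$ in $F\times\{\infty\}$ outside $A$; similarly $\cl B$ meets $(F\cap\cl(X_\infty\setminus F))\times\{\infty\}$ in points other than $(x_*,\infty)$. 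No restriction to a small neighbourhood of $x_*$ repairs this, since every uncountable piece of $\{x_i\}$ again has many condensation points. If instead you shrink $D'$ to a sequence $x_{i_n}\to x_*$ to restore closedness, $A$ becomes countable and your intended contradiction evaporates: your own sketch needs ``an uncountable set of fibers'' to defeat the countably many finite exceptional sets $G$, and for countable $A,B$ a separation can in fact exist. The paper resolves precisely this by deleting not a point but the whole set $F\times\{\infty\}$: in $Y=X\setminus(F\times\{\infty\})$ the sets $Y\cap((K\setminus F)\times\{\infty\})$ and $Y\cap(F\times D)$ \emph{are} closed and disjoint, and a Lindel\"of subcover argument shows that any open $U$ around the first set must contain $Y\cap((K\setminus F)\times\widetilde D)$ for some co-countable $\widetilde D\subseteq D$ (using also that, $X$ being closed in $K\times\alpha D$, only countably many fibers meet the $\sigma$-compact set $(K\setminus W)\setminus F$ for the open $W\supseteq X_\infty\setminus F$ coming from the subcover); then any bad index $i\in\widetilde D$ forces $V$ to contain a point of $(X_i\setminus F)\times\{i\}\subseteq U$, a contradiction. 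On this route your covering lemma and the condensation point $x_*$ are not needed at all.

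Your $(3)\Rightarrow(1)$ correctly identifies the crux --- local finiteness at the infinity fiber --- but supplies no mechanism for it, and that mechanism is the substantive half of the paper's proof. First, you should reduce to showing every \emph{open} subspace is paracompact (standard for regular spaces), rather than handling arbitrary $S\subseteq X$. The paper then applies weak nwd-countability not to spheres $\partial B(x,\e)$ but to the single closed nowhere dense set $\partial U_\infty$ (the boundary in $X_\infty$ of the trace of the open set $U$), obtaining a countable exceptional set $E_\infty$ such that for $i\in D\setminus E_\infty$ the set $U_i\setminus U_\infty$ is open in $U_i$, hence admits its own locally finite refinement $\U_i$. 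Local finiteness at infinity is then manufactured by a staged compact exhaustion $U_\infty=\bigcup_{n\in\w}V_n$ with $\cl V_n$ compact and contained in $V_{n+1}$: finite families $\V_n$ cover the annuli $(\cl V_{n+1}\setminus V_{n-1})\times\{\infty\}$ inside bands $(V_{n+2}\setminus\cl V_{n-2})\times\alpha D$; increasing finite sets $E_n$ with $E_\infty\subseteq\bigcup_n E_n$ absorb the exceptional fibers; and the leftover open metrizable pieces $U\cap((K\setminus\cl V_n)\times(E_{n+1}\setminus E_n))$ receive locally finite covers $\W_n$. The union $\bigcup_{i\in D\setminus E_\infty}\U_i\cup\bigcup_{n\in\w}(\V_n\cup\W_n)$ is locally finite because near any point of the infinity fiber only finitely many annular families and finitely many exceptional fibers are visible. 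Without some such device, your extensions ``reaching into cofinitely many fibers'' will indeed pile up; as written, this direction is a plausible plan rather than a proof.
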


\begin{proof}
The implication (1)$\Rightarrow$(2) is obvious.
\smallskip

$(2)\Ra(3)$ Assume that $X$ is hereditarily normal but the family $\{X_i\colon i\in D\}$ is not weakly nwd-countable. In this case we can find a closed nowhere subset $F\subseteq K$ such that the set $\{i\in D\colon X_i\cap F\text{ is not open in }X_i\}$ is uncountable. We claim that the open subspace $Y=X\setminus (F\times\{\infty\})$ of $X$ is not normal. Indeed, consider the disjoint closed subsets $Y\cap ((K\setminus F)\times\{\infty\})$ and $Y\cap(F\times D)$ in the space $Y$. Assuming that the space $Y$ is normal, we would find disjoint open sets $U,V\subset Y$ such that $Y\cap ((K\setminus F)\times\{\infty\})\subset U$ and $Y\cap (F\times D)\subset V$. Using the Lindel\"of property of the metrizable separable space $Y\cap ((K\setminus F)\times\{\infty\})\subset U$, we can find a subset $\widetilde D\subset D$ with at most countable complement $\widetilde D\setminus D$ such that $Y\cap ((K\setminus F)\times \widetilde D)\subset U$. By the choice of $F$, there is a point $i\in \widetilde D$ such that the set $X_i\cap F$ is not open in $X_i$. Since the set $V\cap (X_i\times\{i\})$ is an open neighborhood of the non-open set $Y\cap (F\cap\{i\})=(X_i\cap F)\times\{i\}$ in $X_i\times\{i\}$, there is a point $(x,i)\in V\cap ((X_i\setminus F)\times\{i\})$. Then $(x,i)\in
Y\cap ((K\setminus F)\times \tilde D)\subset U$, which is not possible as $V$ and $U$ are disjoint.
\smallskip

$(3)\Ra(1)$ Assuming that the indexed family $\{X_i\colon i\in D\}$ is weakly nwd-countable in $K$, we shall prove that the space $X$ is hereditarily paracompact. This will follow as soon as we check that  each open subspace $U$ of $X$ is paracompact. Given any open cover $\U$ of $U$, we need to construct a locally finite open cover $\V$ of $U$ refining the cover $\U$. For every $i\in\alpha D$ consider the section $U_i=\{x\in K: (x,i)\in U\}$ of the set $U$. It follows that the set $U_\infty$ is open in the section $X_\infty=\{x\in K:(x,\infty)\in X\}$ of $X$. So, its boundary $\partial U_\infty$ in $X_\infty$ is nowhere dense in $K$. The weak nwd-countability of the family $\{X_i\}_{i\in D}$ implies that the set $E_\infty=\{i\in D:X_i\cap\partial U_\infty$ is not open in $X_i\}$ is at most countable.

By the definition of the set $E_\infty$, for every index $i\in D\setminus E_\infty$ the set $X_i\cap \partial U_\infty$ is open in $X_i$ and hence the set $U_i\setminus U_\infty$ is open in $U_i$.  By the paracompactness of the metrizable space $U_i\setminus U_\infty$, we can find a locally finite open cover $\U_i$ of $(U_i\setminus U_\infty)\times\{i\}=\bigcup\U_i$ inscribed into the cover $\U$.

Being locally compact and $\sigma$-compact, the set $U_{\infty}$ can be written as the countable union $U_\infty=\bigcup_{n\in\w}V_n$ of open subsets such that $V_1=\emptyset$ and for every $n\in\w$ the closure $\cl V_n$ of $V_n$ in $U_\infty$ is compact and lies in the set $V_{n+1}$. For every $n\in\w$ choose a finite family $\V_n$ of open subsets of $U$ such that $$(\cl V_{n+1}\setminus V_{n-1})\times\{\infty\}\subset \textstyle{\bigcup}\V_n\subset (V_{n+2}\setminus\cl V_{n-2})\times\alpha D$$ and the family $\V_n$ is inscribed into the cover $\U$. Using the compactness of $\cl V_{n+1}\setminus V_{n-1}$ we can find a finite subset $E_n\subset D$ such that $U\cap((\cl V_{n+1}\setminus V_{n-1})\times (\alpha D\setminus E_n))\subset\bigcup \V_n$. Replacing each set $E_n$ by a larger finite set, if necessary, we can assume that $E_{n-1}\subset E_{n}$ and $E_\infty\subset \bigcup_{n\in\w}E_n$. Since $V_1=\emptyset$, we can put $E_0=\emptyset$.

For every $n\in\w$, by the paracompactness of the open metrizable subspace
$M_n=U\cap ((K\setminus \cl V_n)\times (E_{n+1}\setminus E_n))$ of $U$, there is a locally finite open cover $\W_n$ of $M_n=\cup\W_n$ refining the cover $\U$.  The interested reader can check that the family $$\V=\bigcup_{i\in D\setminus E_\infty}\U_i\cup\bigcup_{n\in\w}(\V_n\cup\W_n)$$ is a locally finite open cover of $U$ refining the cover $\U$ and witnessing that the space $U$ is paracompact.
\end{proof}

Next we study the supercompactness of closed subspaces $X\subseteq K\times\alpha D$.

\begin{proposition} Let $K$ be a compact metrizable space and $\alpha D=\{\infty\}\cup D$ be the one-point compactification of a discrete space $D$. A closed subspace $X\subseteq K\times\alpha D$ is supercompact if the subspace $X_\infty=\{x\in K:(x,\infty)\in X\}$ is zero-dimensional.
\end{proposition}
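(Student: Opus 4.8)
The plan is to verify supercompactness of $X$ by exhibiting a binary closed $k$-network and invoking Theorem~\ref{k-net}, checking the $k$-network condition pointwise through Lemma~\ref{lm1}. First I would record the structural facts to be used. For $i\in D$ the slice $K\times\{i\}$ is clopen in $K\times\alpha D$, so each section $X_i$ is a compact metrizable (hence supercompact) subspace of $K$ and $X_i\times\{i\}$ is clopen in $X$; the section $X_\infty$ is closed in $K$ and, crucially, every open neighbourhood $W\supseteq X_\infty$ in $K$ satisfies $X_i\subseteq W$ for all but finitely many $i\in D$ (otherwise points $x_i\in X_i\setminus W$ with $i\to\infty$ would accumulate, by compactness of $K$, at a point of $X_\infty$ lying outside $W$, which is impossible). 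The hypothesis enters only through $X_\infty$: being zero-dimensional compact metrizable, it carries a refining sequence of finite clopen partitions $\{p_n^{-1}(a):a\in Z_n\}$, $n\in\w$, with $\diam\to0$, whose pieces form a \emph{tree family} (any two are nested or disjoint). The key elementary observation is that such a tree family of clopen sets is binary: a linked subfamily is a chain and its intersection is its smallest member.

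Next I would thicken this clopen structure into $K$. By induction on $n$, using the metric of $K$, I would choose open sets $U_{n,a}\supseteq p_n^{-1}(a)$ with pairwise disjoint closures at each level, nested along the tree ($U_{n+1,b}\subseteq U_{n,a}$ when $p_{n+1}^{-1}(b)\subseteq p_n^{-1}(a)$), with $\diam U_{n,a}\to0$ and $\cl U_{n,a}\cap X_\infty=p_n^{-1}(a)$. The set $E_n=\{i\in D:X_i\not\subseteq\bigcup_a U_{n,a}\}$ of ``bad'' fibres is finite by the accumulation fact, and since $\bigcup_a U_{n,a}$ decreases in $n$ the set $\mathcal G_i=\{n:i\notin E_n\}$ of good levels is an initial segment of $\w$. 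The columns
$$\Gamma_{n,a,E}=X\cap\bigl(\cl U_{n,a}\times(\alpha D\setminus E)\bigr),\qquad E\in[D]^{<\w},\ E\supseteq E_n,$$
are closed in $X$, and building the exclusion of $E_n$ into them makes any two same-level columns with distinct values disjoint (on good fibres $\partial U_{n,a}$ misses $X_i$, and at $\infty$ the traces are disjoint pieces). I would then show that two columns are linked only if their $\infty$-traces are nested: any witnessing point sits over a fibre good at both levels, where nesting of the $U$'s and disjointness of same-level pieces force the values to be compatible. Hence a linked family of columns is a chain of nested traces $C_1\supseteq\dots\supseteq C_r$, and any $(x,\infty)$ with $x\in C_r$ lies in all of them.

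To separate points inside fibres I would split into two cases. If $\mathcal G_i$ is infinite (equivalently $X_i\subseteq X_\infty$, so $X_i$ is zero-dimensional), the columns already provide arbitrarily small clopen neighbourhoods $X_i\cap U_{n,a}$ of each $(x,i)$, and no extra sets are needed. If $\mathcal G_i$ is finite, the good-level traces cut $X_i$ into a finite clopen partition into ``atoms''; on each atom $P^i_\alpha$, compact metrizable, I fix a binary closed $k$-network $\mathcal N^i_\alpha$ via Strok--Szymanski \cite{SS}. The proposed network is
$$\K=\{\Gamma_{n,a,E}\}\ \cup\ \bigcup_{i:\,\mathcal G_i\text{ finite}}\ \bigcup_{\alpha}\ \{A\times\{i\}:A\in\mathcal N^i_\alpha\}.$$
The $k$-network property is then immediate from Lemma~\ref{lm1}: at $(x,\infty)$ a single small column works, and at $(x,i)$ with $i\in D$ an atomic network (or a small clopen trace) works.

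The heart of the argument, and the step I expect to be the main obstacle, is global binarity, i.e.\ coordinating the fibrewise networks with the columns. Given a finite linked subfamily of $\K$, its atomic members must all lie in one atom $P^{i_0}_{\alpha_0}$ of one fibre $i_0$, and its columns form a nested chain. The decisive point is that each column $\Gamma_{n_k,a_k,E_k}$ present is linked with some atomic member $A\subseteq P^{i_0}_{\alpha_0}$; this forces $i_0\notin E_k$, so $i_0$ is good at level $n_k$ and $P^{i_0}_{\alpha_0}$ lies inside the clopen trace $X_{i_0}\cap U_{n_k,a_k}\subseteq\cl U_{n_k,a_k}$. Thus a common point $x_*\in\bigcap A$ of the atomic members, which exists because $\mathcal N^{i_0}_{\alpha_0}$ is binary, automatically satisfies $x_*\in\cl U_{n_k,a_k}$ for every column, so $(x_*,i_0)$ lies in the whole family; when there are no atomic members, a point of $C_r$ at $\infty$ serves. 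Verifying these disjointness, nesting, and containment relations carefully — in particular that the constraint $E\supseteq E_n$ is exactly what aligns the finite-fibre combinatorics with the tree structure on $X_\infty$ — is where the real work lies, everything else being routine. By Theorem~\ref{k-net}, the existence of this binary closed $k$-network yields the supercompactness of $X$.
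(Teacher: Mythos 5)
Your architecture is essentially the paper's own (your sets $\cl U_{n,a}$ play the role of the paper's families $\mathcal B_n$, your bad sets $E_n$ are the complements of its sets $D_n$, the columns $\Gamma_{n,a,E}$ are its first block of network members, and the Strok--Szymanski networks on the atoms of eventually-bad fibres correspond to its refined families $\K_i$ for $i\in D_n\setminus D_{n+1}$), but there is one genuine gap, located precisely at the step you dismiss: the claim that for fibres $i$ with $\mathcal G_i$ infinite (equivalently $X_i\subseteq X_\infty$) ``the columns already provide arbitrarily small clopen neighbourhoods $X_i\cap U_{n,a}$ of each $(x,i)$, and no extra sets are needed.'' The traces $X_i\cap U_{n,a}$ are \emph{not} members of your family $\K$; the columns are, and every column $\Gamma_{n,a,E}$ contains the non-empty set $p_n^{-1}(a)\times\{\infty\}$, hence is never contained in the clopen fibre $X_i\times\{i\}$. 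Now take the open set $U=X_i\times\{i\}$ and the compact singleton $\{(x,i)\}\subseteq U$: a finite $\F\subseteq\K$ with $\{(x,i)\}\subseteq\bigcup\F\subseteq U$ would have to consist of members lying inside the fibre, and the only such members of your $\K$ are the atomic sets $A\times\{i\}$, which you introduce \emph{only} for fibres with $\mathcal G_i$ finite. So $\K$ fails to be a $k$-network at every point of every all-good fibre, and Theorem~\ref{k-net} cannot be invoked. This case is not vacuous: for $X=X_\infty\times\alpha D$ with $X_\infty$ a Cantor set, every fibre satisfies $X_i=X_\infty$, so \emph{all} fibres are of the problematic kind and your $\K$ contains no set inside any fibre $X_i\times\{i\}$, $i\in D$.

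The repair is exactly the third block of sets in the paper's family, which your $\K$ omits: for each all-good fibre $i$ (the paper's $i\in\bigcap_{n\in\w}D_n$) add the single-fibre traces $X\cap\bigl(\cl U_{n,a}\times\{i\}\bigr)$ for all $n\in\w$ and all $a$. These are clopen in the zero-dimensional fibre $X_i\times\{i\}$ (here your identity $X_i\cap\cl U_{n,a}=X_i\cap U_{n,a}$ for good $i$ does the work), have diameter at most $\diam U_{n,a}\to 0$, and restore the $k$-network property at such fibres via Lemma~\ref{lm1}. Binarity survives: within one fibre these traces form a tree family, so a linked subfamily of them is a chain with smallest member; linkedness of such a trace with a column $\Gamma_{m,b,E}$ forces, exactly as in your mixed case, that $i\notin E$, that $i$ is good at level $m$, and that the trace's node is tree-compatible with $(m,b)$, so any point of the minimal trace, paired with $i$, lies in all members. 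With this one addition your argument goes through and becomes, in substance, the paper's own proof.
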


\begin{proof} Fix a metric $d$ generating the topology of $K$. Multiplying the metric $d$ by a suitable positive constant, we can assume that $\diam(K,d)\le 1$. Let $\U_0=\{K\}$. Using the zero-dimensionality of the space $X_\infty$, by induction for every $n\in\w$ we can construct a finite disjoint family $\mathcal B_n$ of closed subsets of $K$ such that
\begin{enumerate}
\item $X_\infty\subset\bigcup_{B\in\mathcal B_n}\Int(B)$;
\item each set $B\in\mathcal B_n$ has diameter $\diam(B)\le 2^{-n}$;
\item the family $\mathcal B_n$ refines the family $\{\Int(B):B\in\mathcal B_{n-1}\}$;
\item for each set $B\in\mathcal B_n$ the intersection $B\cap X_\infty$ is a non-empty closed-and-open subset of $X_\infty$.
\end{enumerate}
For every $n\in\w$ consider the set $D_n=\{i\in D:X_i\subset \bigcup\mathcal B_n\}$ and observe that the complement $D\setminus D_n$ is finite and $D_{n+1}\subset D_{n}$ for all $n\in\w$. For every index $i\in D_{n}\setminus D_{n+1}$ the compact metrizable space $X_i$ is supercompact and hence possesses a binary closed $k$-network $\mathcal K_i$. Replacing $\K_i$ by its subfamily $\{K\in\K_i:\exists B\in\mathcal B_n\;\;K\subset B\}$ we can assume that the family $\K_i$ refines the family $\mathcal B_n$.

The reader can check that the family
$$
\begin{aligned}
\K=\;&\big\{X\cap(B\times \alpha E):n\in\w,\;B\in\mathcal B_n,\; E\subset D_n \mbox{ and } |D\setminus E|<\infty\big\}\cup\\
&\cup\big\{K\times\{i\}:n\in\w,\;\;i\in D_n\setminus D_{n+1},\;\;K\in\mathcal K_i\big\}\cup\\
&\cup\big\{X\cap(B\times \{i\}):B\in\bigcup_{n\in\w}\mathcal B_n,\; i\in \bigcap_{n\in\w}D_n\big\}
\end{aligned}
$$
is a binary closed $k$-network for the space $X$ witnessing that this space is supercompact.
\end{proof}

\begin{corollary}\label{c:zero} For each zero-dimensional compact metrizable space $K$ and each discrete space $D$ the product $K\times\alpha D$ is hereditarily supercompact.
\end{corollary}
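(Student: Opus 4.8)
The plan is to derive this corollary directly from the preceding Proposition, which asserts that a closed subspace $X\subseteq K\times\alpha D$ is supercompact whenever its $\infty$-section $X_\infty=\{x\in K:(x,\infty)\in X\}$ is zero-dimensional. Since hereditary supercompactness means that \emph{every} closed subspace is supercompact, it suffices to verify the zero-dimensionality hypothesis of that Proposition for an arbitrary closed subspace $X\subseteq K\times\alpha D$.

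First I would observe that the section $X_\infty$ is a closed subset of $K$: the map $x\mapsto(x,\infty)$ is a continuous embedding of $K$ into $K\times\alpha D$, and $X_\infty$ is precisely the preimage of the closed set $X$ under this embedding, hence closed in $K$. Next, since $K$ is a zero-dimensional compact metrizable space, every closed (indeed, every) subspace of $K$ is again zero-dimensional; this is standard, as the clopen sets of $K$ restrict to a clopen base on any subspace. Consequently $X_\infty$ is zero-dimensional.

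Applying the preceding Proposition to $X$ then yields that $X$ is supercompact. As $X$ was an arbitrary closed subspace of $K\times\alpha D$, this establishes that the product $K\times\alpha D$ is hereditarily supercompact, as claimed.

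I do not anticipate any genuine obstacle: the substantive work—the explicit construction of a binary closed $k$-network under the hypothesis that $X_\infty$ is zero-dimensional—has already been carried out in the preceding Proposition. The only point requiring (routine) care is the passage from ``$K$ is zero-dimensional'' to ``$X_\infty$ is zero-dimensional,'' which rests on the hereditary nature of zero-dimensionality for subspaces of compact metrizable spaces and so presents no difficulty.
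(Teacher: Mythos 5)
Your proof is correct and takes exactly the route the paper intends: the corollary is an immediate consequence of the preceding proposition, obtained by noting that for any closed $X\subseteq K\times\alpha D$ the section $X_\infty$ is a subspace of the zero-dimensional space $K$ and hence itself zero-dimensional (the paper leaves this routine verification implicit).
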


If the space $K$ is infinite and $D$ is uncountable, then the product $K\times\alpha D$ is not monotonically normal \cite{HLZ}. So, the class of hereditarily supercompact spaces is wider than the class of monotonically normal compact spaces.

The zero-dimensionality of the space $K$ is essential in Corollary~\ref{c:zero}. We shall show that for the closed interval $\II=[0,1]$ and any discrete space $D$ of cardinality $|D|\ge\non(\M)$ the product $\II\times\alpha D$ is not hereditarily supercompact.

We shall need some information on the hyperspaces of compact metrizable spaces. For a compact space $K$ its {\em hyperspace} $\exp(K)$ is the space of all non-empty closed subsets of $K$ endowed with the Vietoris topology. If the topology of the space $K$ is generated by a metric $d$, then the Vietoris topology of the hyperspace $\exp(K)$ is generated by the Hausdorff metric
$$d_H(A,B)=\max\{\sup_{a\in A}d(a,B),\max_{b\in B}d(b,A)\}, \;\;A,B\in\exp(K).$$

Let us recall that a subset $A$ of a topological space $X$
is called
\begin{itemize}
\item {\em meager} in $X$ if it can be written as a countable union $A=\bigcup_{n\in\w}A_n$ of nowhere dense subsets of $X$;
\item {\em Baire} in $X$ if for each non-empty open subset $V\subseteq X$ the set $A\cap V$ is not meager in $V$.
\end{itemize}
It is well-known that for each non-meager subset $B$ in a Polish space $X$ there is a non-empty open subset $U\subseteq X$ such that the intersection $U\cap B$ is a dense Baire subspace in $U$.

We recall that for a closed subset $X\subseteq K\times\alpha D$ and a point $i\in\alpha D$ by $X_i=\{x\in K:(x,i)\in X\}$ we denote the $i$-th section of $X$.

\begin{theorem}\label{super} Let $X\subseteq\II\times\alpha D$ be a closed subset of the product of the closed unit interval $\II=[0,1]$ and the one-point compactification $\alpha D=\{\infty\}\cup D$ of a discrete space $D$. If the space $X$ is supercompact, then the subspace $\mathcal X=\{X_i:i\in D\}$ is meager in the hyperspace $\exp(\II)$.
\end{theorem}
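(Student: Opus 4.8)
I would prove the statement in contrapositive form: assuming that the set $\mathcal X=\{X_i:i\in D\}$ is \emph{non}-meager in $\exp(\II)$, I would derive that $X$ is not supercompact by means of Bell's Theorem~\ref{bell}. Since $X$ is trivially a continuous image of itself, a supercompact space, every continuous image $Y$ of $X$ satisfies the conclusion of Theorem~\ref{bell}: for each open dense $U\subseteq Y$ one has $d(Y\setminus U)\le nw(U)$. Thus the whole proof reduces to the single task of \emph{constructing a continuous image $Y$ of $X$ together with an open dense subset $U\subseteq Y$ with $nw(U)=\w$ and with $Y\setminus U$ non-separable}; this immediately contradicts $d(Y\setminus U)\le nw(U)=\w$.

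The first thing to observe is that such a $Y$ cannot be $X$ itself, so a genuinely non-injective continuous image is unavoidable. Indeed, for each $i\in D$ the section $X_i\times\{i\}$ is clopen in $X$, so any open \emph{dense} $U\subseteq X$ must meet every non-empty clopen section; hence any network for $U$ must separate the (possibly $|D|$-many) sections, giving $nw(U)\ge|\{i\in D:X_i\ne\emptyset\}|\ge d(X\setminus U)$. So on $X$ directly Bell's inequality can never be violated, and the reduction really forces us to \emph{glue} sections to lower the network weight while preserving a non-separable remainder. The engine for keeping the remainder large will be the non-meagerness hypothesis. Concretely, I would first invoke the stated fact that a non-meager subset of the Polish space $\exp(\II)$ contains, after localization, a dense Baire piece: there is a non-empty Vietoris-open $W\subseteq\exp(\II)$ such that $B=\mathcal X\cap W$ is a dense Baire subspace of $W$. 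Working inside $W$ with the Hausdorff metric, I would run a Cantor scheme $\{V_s:s\in 2^{<\w}\}$ of shrinking basic open sets with disjoint closures and diameters tending to $0$ along branches, and, using that $B$ is dense (indeed non-meager, hence uncountable, in each $V_s$), select pairwise distinct indices $i(s)\in D$ with $X_{i(s)}\in B\cap V_s$. The countable tree $2^{<\w}$ of nodes, together with the almost disjoint family $\{A_\sigma=\{\sigma|n:n\in\w\}:\sigma\in 2^\w\}$ of branches, furnishes the combinatorial skeleton; the chosen sections $X_{i(s)}$ give the countable set that should become $U$, and the uncountably many branch limits should become the non-separable remainder $Y\setminus U$.

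The main obstacle, on which the whole argument turns, is the construction of the continuous quotient $f\colon X\to Y$ realizing this skeleton, and the verification that its remainder is non-separable. The difficulty is structural: the sections may be ``fat'', and for each branch $\sigma$ the sections $X_{i(\sigma|n)}$ accumulate (in $X$) onto the $\infty$-fibre $X_\infty\times\{\infty\}\subseteq\II\times\{\infty\}$, which is metrizable and hence \emph{separable}; any remainder point arising as a \emph{sequential} limit of $U$ is therefore forced into this separable set, which would collapse the intended non-separability. Consequently the remainder points must be produced as genuine net-limits, i.e.\ through non-first-countable behaviour of $Y$, and this is exactly where the Baire-category strength of non-meagerness (as opposed to mere uncountability of $\mathcal X$) must be spent: one has to arrange the gluing so that Hausdorff-close sections are identified—driving $nw(U)$ down to $\w$—while the abundance guaranteed by the dense Baire set $B$ keeps an uncountable family of ``far apart'' branch limits separated in $Y$, so that $Y\setminus U$ contains an uncountable relatively discrete set and is non-separable. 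I would carry this out by defining $f$ as a carefully saturated quotient (checking that the corresponding equivalence relation is closed, so that $Y$ is compact Hausdorff, that $U$ is open—being a union of images of clopen sections—and dense, and that distinct branch limits retain disjoint neighbourhoods), and I expect the continuity of $f$ at the $\infty$-fibre together with the non-separability of $Y\setminus U$ to be by far the most delicate and technical steps, with everything else (the reduction to Bell and the Cantor-scheme selection) being routine.
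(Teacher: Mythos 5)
Your proposal reduces the whole theorem to one task: produce a continuous image $Y$ of $X$ together with a dense open $U\subseteq Y$ such that $nw(U)=\w$ while $Y\setminus U$ is non-separable, contradicting Bell's Theorem~\ref{bell}. This task is not merely delicate --- it is impossible, so the plan cannot be completed. The space $X$ is a closed subspace of $\II\times\alpha D$, which is uniform Eberlein compact (Proposition~\ref{p3.1}). By the Benyamini--Rudin--Wage theorem, every continuous image $Y$ of an Eberlein compact space is again Eberlein, hence Corson compact, and Corson compacta are monolithic: the closure of any subset $A$ has weight at most $|A|\cdot\w$. Consequently, for \emph{any} continuous image $Y$ of $X$ and any dense $U\subseteq Y$ with $nw(U)\le\kappa$, the set $U$ contains a dense subset of cardinality at most $\kappa$, so $d(Y)\le\kappa$, hence $w(Y)\le\kappa$ by monolithicity, and therefore $d(Y\setminus U)\le w(Y)\le\kappa\le nw(U)$. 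Thus Bell's inequality holds automatically in every continuous image of every closed subspace of $\II\times\alpha D$, supercompact or not, and it can never detect the failure of supercompactness here. In your specific case $\kappa=\w$: your $U$ would make $Y$ a separable Corson compact, hence metrizable, forcing $Y\setminus U$ to be separable. Your own worry --- that sequential limits of $U$ are trapped in the separable $\infty$-fibre --- was a symptom of exactly this obstruction, but your proposed escape via ``genuine net-limits and non-first-countable behaviour of $Y$'' is also closed off, since Corson compacta are Fr\'echet--Urysohn. Independently of all this, the step your proof identifies as its core (the closed saturated equivalence relation, continuity at the $\infty$-fibre, non-separability of the remainder) is only announced, never executed, so even absent the obstruction the argument would be a sketch with its main step missing.

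The paper's proof takes an entirely different route that avoids continuous images altogether: it fixes a binary closed $k$-network $\K$ on $X$ (Theorem~\ref{k-net}) and contradicts binarity directly. After localizing the non-meager family $\mathcal X$ to a dense Baire subspace of a ball $\BB_\e(C_0)\subseteq\exp(\II)$ --- the same opening move as yours --- it applies Lemma~\ref{lm1} to extract finite subfamilies $\F$ and $\K_F$ of $\K$ around the points $(c,\infty)$ and $(\mu_F,\infty)$, where the thresholds $\mu_F=\sup\{\mu: D_{F,\mu}\mbox{ is finite}\}$ are read off from $\K$ itself. The density of $\{X_i: i\in D_\e\setminus D'\}$ in the ball, with $D'$ a countable set of exceptional indices, then yields a single section $C=X_i$ that meets $(c-\delta,c+\delta)$ and each $(\mu_F-\delta_F,\mu_F)$ but misses $[\mu_F,\mu_F+\delta_F]$; this produces a linked triple $\{K_i,K_F,F\}\subseteq\K$ with empty intersection. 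That is where the Baire-category strength of non-meagerness is actually spent --- on choosing one well-placed section while avoiding countably many bad indices --- not on building a quotient. If you want to repair your write-up, this direct $k$-network argument is the route to take.
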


\begin{proof} Assume conversely that the subspace $\mathcal X=\{X_i\colon i\in D\}$ is not meager in $\exp(\II)$.  By Theorem~\ref{k-net}, the supercompact space $X$ possesses a binary closed $k$-network $\K$. For each point $i\in D$ the section $X_i\times\{i\}=X\cap (\II\times\{i\})$ is closed-and-open in $X$. Since $\K$ is a $k$-network, there exists a finite subfamily $\K_i\subseteq\K$ such that $X_i\times\{i\}=\bigcup\K_i$. It follows that some set $K_i\in\K_i$ has non-empty interior in $X_i\times\{i\}$. So, we can find two rational numbers $a_i<b_i$ in $\II$ such that  $\emptyset\ne\big((a_i,b_i)\cap X_i\big)\times\{i\}\subseteq K_i$.

For any rational numbers $a<b$ in $\II$, consider the subset
$$D_{a,b}=\{i\in D:(a_i,b_i)=(a,b)\}$$ and observe that
$$D=\bigcup\{D_{a,b}:a<b,\;a,b\in\II\cap\IQ\}.$$ Since the space $\mathcal X=\{X_i:i\in D\}$ is not meager in $\exp(\II)$, for some rational numbers $a<b$ in $\II$ the space $\mathcal X_{a,b}=\{X_i:i\in D_{a,b}\}$ is not meager in $\exp(\II)$. Consequently, for some non-empty open set $\U\subseteq\exp_0(\II)$ the intersection $\mathcal X_{a,b}\cap\U$ is a dense Baire subspace in $\U$.

Choose a set $C_0\in \mathcal X_{a,b}\cap \U$, a point $c\in (a,b)\cap C_0$ and $\e>0$ such that $(c-\e,c+\e)\subset (a,b)$ and the open set $\U$ contains the $\e$-ball
$$\BB_\e(C_0)=\{C\in\exp(\II):d_H(C,C_0)<\e\}\subset\exp(\II)$$
centered in $C_0$. It follows that $\mathcal X_{a,b}\cap\BB_{\e}(C_0)$ is a dense Baire subspace in the ball $\BB_\e(C_0)$. Let $D_\e=\{i\in D_{a,b}:X_i\in \BB_\e(C_0)\}$ and observe that $\{X_i:i\in D_\e\}=\mathcal X_{a,b}\cap\BB_\e(C_0)$. It follows that $$(c,\infty)\in C_0\times\{\infty\}\in \cl(\{X_i\times\{i\}:i\in D_\e\}\subseteq X.$$

By Lemma~\ref{lm1}, the $k$-network $\K$ contains a finite subfamily $\F\subseteq\K$ such that $$(c,\infty)\in \Int(\textstyle{\bigcup}\F)\subseteq\textstyle{\bigcup}\F\subseteq X\cap\big((c-\e/2,c+\e/2)\times\alpha D\big)$$ and $(c,\infty)\in\cl(\Int(F))$ for every $F\in\F$.

For every $F\in\F$ and a real number $\mu\in [c-\e/2,c+\e/2]$ consider the subfamily
$$D_{F,\mu}=\big\{i\in D_\e:F\cap(X_i\times\{i\})\not\subseteq[\mu,1]\times \alpha D\big\}.$$
It follows from $F\subseteq(c-\e/2,c+\e/2)\times\alpha D$ that $D_{F,c-\e/2}=\emptyset$,
so it is legal to consider the real number
$$\mu_F=\sup\{\mu\in[c-\e/2,c+\e/2]:D_{F,\mu}\mbox{ is finite}\}.$$
It follows that the set $D_{F,\mu_F}$ is at most countable and $(\mu_F,\infty)\in \cl\big(\bigcup_{i\in D_\e}X_i\big)\subseteq X$.

Choose a positive number $\delta<\e/2$ such that $$\delta<\frac12\min\big\{|x-y|:x,y\in\{\mu_F:F\in\F\},\;x\ne y\big\}$$and
$(c,\infty)\in X\cap\big((c-\delta,c+\delta)\times \alpha D_c\big)\subseteq \Int(\bigcup\F)$ for some
set $D_c\subseteq D$ with finite complement $D\setminus D_c$.

For every $F\in\F$ consider the point $(\mu_F,\infty)\in X$ and its open neighborhood $U_F=X\cap\big((\mu_F-\delta,\mu_F+\delta)\times\alpha D\big)$. Since $\F$ is a $k$-network, there is a finite subfamily $\K_F\subset\K$ such that
$$(\mu_F,\infty)\in\Int(\bigcup\K_F)\subseteq\bigcup\K_F\subseteq U_F\text{ and }(\mu_F,\infty)\in \cl(\Int(K))\subseteq K$$ for each $K\in\K_F$.
Since $(\mu_F,\infty)\in\Int(\bigcup\K_F)$, we can find a positive real number $\delta_F<\delta$ and a subset $D_\F\subset D$ with finite complement $D\setminus D_F$ such that $X\cap\big((\mu_F-\delta_F,\mu_F+\delta_F)\times \alpha D_F\big)\subseteq\Int(\bigcup\K_F)$.

It follows that the set
$$D'=(D\setminus D_c)\cup \bigcup_{F\in\F}(D\setminus D_F) \cup D_{F,\mu_F}$$
is at most countable. Taking into account that $\{X_i\colon i\in D_\e\}=\mathcal X_{a,b}\cap\BB_\e(C_0)$ is a dense Baire subspace of the Polish space $\BB_\e(C_0)$, we conclude that the set $\{X_i\colon i\in D_\e\setminus D'\}$ is dense in the $\e$-ball $\BB_\e(C_0)$.

Choose a closed subset $C\subseteq \II$ such that
\begin{enumerate}
\item $d_H(C,C_0)<\e$;
\item $C\cap (c-\delta,c+\delta)\ne\emptyset$;
\item $C\cap(\mu_F-\delta_F,\mu_F)\ne\emptyset$ for each $F\in\F$;
\item $C\cap[\mu_F,\mu_F+\delta_F]=\emptyset$ for each $F\in\F$.
\end{enumerate}
The choice of the set $C$ is possible since $c\in C_0$ and $\{\mu_F\colon F\in\F\}\subseteq [c-\e/2,c+\e/2]$.
Since the set $\{X_i:i\in D_\e\setminus D'\}$ is dense in $\BB_\e(C_0)$, we can additionally assume that $C=X_i$ for some $i\in D_\e\setminus D'$. The definition of the set $D'\not\ni i$ guarantees that $i\in D_c\cup\bigcup_{F\in\F}D_F$.

The choice of the set $C=X_i$ guarantees that the intersection $X_i\cap (c-\delta,c+\delta)$ contains some point $c'$. Then $(c',i)\in X\cap\big((c-\delta,c+\delta)\times D_c)\subset\Int(\bigcup \F)\subseteq\bigcup\F$ implies that $(c',i)\in F$ for some $F\in\F$. Taking into account that $i\notin D_{F,\mu_F}$, we conclude that $(c',i)\in F\cap(X_i\times\{i\})\subset [\mu_F,1]\times\{i\}$.
The choice of the set $C=X_i$ guarantees that the intersection $X_i\cap (\mu_F,\mu-\delta_F)$ contains some point $c_F$ while the intersection $X_i\cap[\mu_F,\mu_F+\delta_F)$ is empty. Then
$$(c_F,i)\in X\cap\big((\mu_F-\delta_F,\mu_F+\delta_F)\times D_F)\subseteq\Int(\textstyle{\bigcup}\K_F)\subseteq\textstyle{\bigcup}\K_F$$implies that $(c_F,i)\in K$ for some $K\in\K_F\subseteq\F$.

We claim that the subfamily $\mathcal L=\{K_i,K_F,F\}\subseteq\K$ is linked but has empty intersection. We recall that $K_i$ is a set from the $k$-network $\K$ such that $\big((a,b)\cap X_i\big)\times\{i\}\subset K_i\subset X_i\times\{i\}$. Observe that
\begin{itemize}
\item $F\cap K_F\ni (\mu_F,\infty)$,
\item $K_i\cap F\supseteq\big(\big(X_i\cap(a,b)\big)\times\{i\}\big)\cap F\ni (c',i)$, and
\item $K_i\cap K_F\supseteq \big(\big(X_i\cap(a,b)\big)\times\{i\}\big)\cap K_F\ni (c_F,i)$,
\end{itemize}
so the family $\mathcal L$ is linked.

On the other hand, taking into account that $F\cap(X_i\times\{i\})\subset[\mu_F,1)\times\{i\}$ and $X_i\cap[\mu_F,\mu_F+\delta_F)=\emptyset$, we conclude that the intersection
$$
\begin{aligned}
K_F\cap F\cap K_i&\subset\big((\mu_F-\delta_F,\mu_F+\delta_F)\times\alpha D\big)\cap
F\cap (X_i\times\{i\})\subset\\
&\subset \big((\mu_F-\delta_F,\mu_F+\delta_F)\cap [\mu_F,1]\cap X_i\big)\times\{i\}=\\
&=\big([\mu_F,\mu_F+\delta_F)\cap X_i)\times\{i\}=\emptyset\times\{i\}=\emptyset
\end{aligned}$$is empty. Therefore, the $k$-network $\K$ is not binary as it contains a linked subfamily $\mathcal L$ with empty intersection. This contradiction completes the proof.
\end{proof}

\section{Constructing non-meager nwd-countable families}

Theorems~\ref{para} and \ref{super} suggest the following open problem.

\begin{problem}\label{meg} Under which conditions is there a (weakly) nwd-countable non-meager family $\C\subseteq\exp(\II)$?
\end{problem}

We shall construct such a family under the set-theoretic assumption $\cof(\M)=\w_1$.
Here $\cof(\M)$ stands for the cofinality of the ideal $\M$ of meager subsets of the real line. It is known that $\cof(\M)=\mathfrak c$ under Martin's Axiom but the strict inequality $\w_1=\cof(\M)<\mathfrak c$ is consistent with ZFC, see~\cite{BJ} or \cite{Blass}. A Polish space $X$ is called {\em crowded} if its has no isolated points. In this case it has cardinality $\mathfrak c$ of continuum.

\begin{theorem}\label{disjoint} The hyperspace $\exp(X)$ of any crowded Polish space $X$ contains:
\begin{enumerate}
\item  a non-meager subfamily $\F$ of cardinality $\non(\M)$;
\item a non-meager nwd-countable subfamily $\mathcal D$ of cardinality $\w_1$ under assumption $\cof(\M)=\w_1$.
\end{enumerate}
\end{theorem}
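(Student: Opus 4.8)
The plan is to work directly in the hyperspace $\exp(X)$ and to exploit two elementary facts about the Vietoris topology. First, for every dense open $U\subseteq X$ the set $\widehat U=\{A\in\exp(X):A\subseteq U\}$ is dense open in $\exp(X)$: it is open by definition, and it is dense because, given a basic neighbourhood $\langle U_1,\dots,U_n\rangle$, one may pick $a_i\in U\cap U_i$ and take $A=\{a_1,\dots,a_n\}\subseteq U$. Consequently, for a dense $G_\delta$ set $G=\bigcap_nV_n\subseteq X$ the set $\{A\in\exp(X):A\subseteq G\}=\bigcap_n\widehat{V_n}$ is comeager in $\exp(X)$. Dually, for each closed nowhere dense $F\subseteq X$ the set $N_F=\{A\in\exp(X):A\cap F\ne\emptyset\}$ is closed and nowhere dense. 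The role of $N_F$ is that nwd-countability of a family $\mathcal D\subseteq\exp(X)$ is \emph{exactly} the statement that $\mathcal D\cap N_F$ is countable for every such $F$; equivalently, for every dense open $U$ all but countably many members of $\mathcal D$ lie in $\widehat U$.

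Before the construction I would record two facts that make category arguments available in the ambient space: $\exp(X)$ is a Baire space, and its meager ideal has the same uniformity and cofinality as that of the line, i.e. $\non(\M(\exp(X)))=\non(\M)$ and $\cof(\M(\exp(X)))=\cof(\M)$. These hold because $\exp(X)$ carries a dense, crowded, completely metrizable subspace whose meager ideal is isomorphic to that of an arbitrary perfect Polish space; in the compact case relevant to the application ($X=\II$) this is transparent, since $\exp(\II)$ is a crowded compact metric space. Granting this, part (1) is immediate: a non-meager subset of $\exp(X)$ of least cardinality has size $\non(\M(\exp(X)))=\non(\M)$, and any such set is the required family $\F$.

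For part (2) I would run a transfinite recursion of length $\w_1$. Using $\cof(\M)=\w_1$ and the invariance above, fix increasing cofinal chains $\langle M'_\eta:\eta<\w_1\rangle$ in the meager ideal of $X$ (with each $M'_\eta$ a meager $F_\sigma$) and $\langle M_\eta:\eta<\w_1\rangle$ in the meager ideal of $\exp(X)$. At stage $\alpha<\w_1$ the set $M'_{<\alpha}=\bigcup_{\beta<\alpha}M'_\beta$ is a countable union of meager sets, hence meager, so $G_\alpha=X\setminus M'_{<\alpha}$ is dense $G_\delta$ and, by the first paragraph, $\{A\in\exp(X):A\subseteq G_\alpha\}$ is comeager in $\exp(X)$. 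Since $M_\alpha$ is meager and $\exp(X)$ is Baire, I may choose
$$C_\alpha\in\{A\in\exp(X):A\subseteq G_\alpha\}\setminus M_\alpha,$$
and I set $\mathcal D=\{C_\alpha:\alpha<\w_1\}$.

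It remains to verify the two properties. Non-meagerness: were $\mathcal D$ meager it would lie in some $M_\gamma$ by cofinality, contradicting $C_\gamma\notin M_\gamma$; as $\exp(X)$ is crowded this also forces $|\mathcal D|=\w_1$. nwd-countability: given a closed nowhere dense $F\subseteq X$, the meager set $F$ lies in some $M'_\eta$, and for every $\alpha>\eta$ we have $C_\alpha\subseteq G_\alpha\subseteq X\setminus M'_\eta\subseteq X\setminus F$, whence $C_\alpha\cap F=\emptyset$; thus $\{\alpha:C_\alpha\cap F\ne\emptyset\}\subseteq\eta+1$ is countable. By the reformulation this is precisely nwd-countability. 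I expect the genuine obstacle to be not this recursion — which is short once the target set is seen to be comeager — but the preliminary bookkeeping: checking that $\{A:A\subseteq G\}$ is comeager and, more delicately, that $\exp(X)$ is Baire with meager-ideal invariants equal to those of $\IR$ when $X$ is a non-compact crowded Polish space, where the Vietoris hyperspace need not even be metrizable. The decisive idea is the translation of ``$C$ misses the nowhere dense set $F$'' into ``$C$ is contained in a dense $G_\delta$'', which converts the countability requirement into membership in a comeager set and lets the $\cof(\M)=\w_1$ chain drive the construction.
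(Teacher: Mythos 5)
Your proof is correct and takes essentially the same route as the paper: part (1) transfers $\non(\M)$ to $\exp(X)$ through a dense completely metrizable crowded subspace (the paper uses a dense copy of $\w^\w$), and part (2) is the same transfinite selection along $\w_1$-chains cofinal in the meager ideals of $X$ and of $\exp(X)$, your lemma that $\{A\in\exp(X):A\subseteq G\}$ is comeager for dense $G_\delta$ sets $G$ being exactly the paper's observation that $\exp(X\setminus M_\alpha)$ is a dense $G_\delta$-set in $\exp(X)$. The only cosmetic differences are that you prove this density fact explicitly via the sets $\widehat U$ and take unions $M'_{<\alpha}$ where the paper simply uses the increasing chain directly.
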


\begin{proof}
1. Let $X$ be a crowded Polish space. Then its hyperspace $\exp(X)$ also is a crowded Polish space and hence it contains a dense topological copy $G$ of the Baire space $\w^\w$. By the definition of the cardinal $\non(\M)$, there is a non-meager subset $\C\subseteq G$ of cardinality $|\C|=\non(\M)$. Since $G$ is a dense $G_\delta$-set in $\exp(X)$, the set $\C$ is  non-meager in $\exp(X)$.
\smallskip

2. Now assume that $\cof(\M)=\w_1$. Then the ideals of meager subsets of the spaces $X$ and $\exp(X)$ both have cofinality equal to $\w_1$. So, we can fix a cofinal family $\{M_\alpha\}_{\alpha\in\w_1}$ of meager $F_\sigma$-subsets in $X$ and a cofinal family $\{\M_\alpha\}_{\alpha\in\w_1}$ of meager subsets in the hyperspace $\exp(X)$. We lose no generality assuming that $M_\alpha\subset M_\beta$ and $\M_\alpha\subset\M_\beta$ for all $\alpha<\beta<\w_1$. For each $\alpha\in\w$ consider the dense $G_\delta$-set $X\setminus M_\alpha$ and its hyperspace $\exp(X\setminus M_\alpha)$, which is a dense $G_\delta$-set in $\exp(X)$ and hence is not contained in the meager set $\M_\alpha$. So, we can choose a compact set $K_\alpha\in\exp(X\setminus M_\alpha)\setminus \M_\alpha$. To finish the proof it remains to observe that the family $\mathcal D=\{K_\alpha\}_{\alpha\in\w_1}$ is non-meager in $\exp(X)$ and nwd-countable in $X$.
\end{proof}

It should be mentioned that the existence of an uncountable nwd-countable family in $\exp(X)$ cannot be proved in ZFC. By $\add(\M)$ we denote the smallest cardinality $|\A|$ of a family $\A$ of meager subsets of $\IR$ with non-meager union $\bigcup\A$. It is clear that $\w_1\le\add(\M)\le\cof(\M)\le\mathfrak c$. Martin's Axiom implies that $\add(\M)=\mathfrak c$, see \cite{BJ} or \cite{Blass}.

\begin{proposition}\label{nwd-cc} If $\add(\M)>\w_1$, then each nwd-countable family $\C\subset\exp(X)$ in the hyperspace of a crowded Polish space $X$ is at most countable.
\end{proposition}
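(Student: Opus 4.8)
The plan is to prove the contrapositive: assuming the family $\C$ is uncountable, I will exhibit a single nowhere dense set that meets uncountably many members of $\C$, which contradicts nwd-countability. Since every subfamily of an nwd-countable family is again nwd-countable, I may fix an injective subfamily $\{C_\alpha:\alpha<\w_1\}\subseteq\C$ of cardinality $\w_1$ and derive the contradiction from it.

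For each $\alpha<\w_1$ I pick a point $x_\alpha\in C_\alpha$, which is possible because the members of $\exp(X)$ are non-empty closed sets, and I form the set $A=\{x_\alpha:\alpha<\w_1\}\subseteq X$. The crucial observation is that $A$ is meager: since $X$ is crowded it has no isolated points, so each singleton $\{x_\alpha\}$ is nowhere dense (hence meager) in $X$, and $A$ is the union of the $\w_1$ meager sets $\{x_\alpha\}$. Here the hypothesis $\add(\M)>\w_1$ does the work, for a union of strictly fewer than $\add(\M)$ meager sets is meager, and $\w_1<\add(\M)$.

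Writing $A=\bigcup_{n\in\w}N_n$ as a countable union of nowhere dense sets, I assign to each index $\alpha$ some $n(\alpha)\in\w$ with $x_\alpha\in N_{n(\alpha)}$. As there are uncountably many indices but only countably many possible values $n(\alpha)$, a pigeonhole argument produces a single $n_0\in\w$ for which the set $\{\alpha<\w_1:x_\alpha\in N_{n_0}\}$ is uncountable. The nowhere dense set $N_{n_0}$ then meets $C_\alpha$ at the point $x_\alpha$ for every such $\alpha$, so uncountably many members of $\C$ intersect $N_{n_0}$, contradicting the nwd-countability of $\C$. I expect no genuine obstacle in this argument; the only delicate points are the use of crowdedness to make singletons nowhere dense and the cardinal bookkeeping ensuring $\w_1<\add(\M)$, both of which are routine.
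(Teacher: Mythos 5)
Your proof is correct, and it follows the same overall skeleton as the paper's argument --- pass to a subfamily of size $\w_1$, use $\add(\M)>\w_1$ to manufacture a meager set, write it as a countable union of nowhere dense sets, and pigeonhole to find a single nowhere dense set meeting uncountably many members of $\C$ --- but it differs in one genuine and worthwhile way: the choice of the meager set. The paper takes the full union $\bigcup\C$ and asserts it is meager because $|\C|=\w_1<\add(\M)$; strictly speaking this presupposes that each member of $\C$ is meager, which is not automatic, since a member of $\exp(X)$ may have non-empty interior. That step can be repaired (in a second countable crowded space, uncountably many members with non-empty interior would force uncountably many of them to contain a common point, and that singleton is nowhere dense, contradicting nwd-countability, so all but countably many members are nowhere dense), but the paper leaves it tacit. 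Your selector trick --- choosing one point $x_\alpha\in C_\alpha$ and applying $\add(\M)>\w_1$ only to the singletons $\{x_\alpha\}$, which are nowhere dense precisely because $X$ is crowded --- sidesteps this issue entirely, so your argument is slightly more self-contained than the printed one. Two minor points in your favor and one shared debt: your pigeonhole correctly runs over the index set $\{\alpha<\w_1\}$ rather than over the point set $A$, which matters because the points $x_\alpha$ may repeat while the sets $C_\alpha$ are distinct; the nowhere dense set $N_{n_0}$ you produce need not be closed, which is fine since the paper's definition of nwd-countable quantifies over arbitrary nowhere dense subsets (only the weak variant insists on closed sets, and the paper's passage to a meager $F_\sigma$ hull is therefore an unnecessary nicety); and, exactly like the paper, you tacitly use the standard fact that the additivity of the meager ideal of an arbitrary crowded Polish space coincides with $\add(\M)$ as defined for the real line.
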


\begin{proof} Assume that some nwd-countable family $\C\subset\exp(X)$ is uncountable. We loss no generality assuming that $|\C|=\w_1$. Since $|\C|=\w_1<\add(\M)$, the union $\bigcup\C$ is meager and hence is contained in a meager $F_\sigma$-set $A=\bigcup_{n\in\w}A_n$ where each set $A_n$, $n\in\w$, is closed and nowhere dense in $X$. Since the set $\C=\bigcup_{n\in\w}\{C\in\C:C\cap A_n\ne\emptyset\}$ is uncountable, for some $n\in\w$ the set $\{C\in\C:C\cap A_n\ne\emptyset\}$ is uncountable too. This means that $\C$ is not nwd-countable.
\end{proof}

We do not know if Proposition~\ref{nwd-cc} can be generalized to weakly nwd-countable families.

\begin{problem} Is it consistent with ZFC that each weakly nwd-countable family $\C\subseteq\exp(\II)$ is countable?
\end{problem}

\section{Main Result}

In this section we combine the results proved in the preceding sections and obtain our main result:

\begin{theorem}\label{main} The product $\II\times\alpha D$ of the closed interval $[0,1]$ and the one-point compactification $\alpha D$ of a discrete space $D$ of cardinality $|D|\ge\non(\M)$ contains a closed subspace $X\subset \II$ which has weight $w(X)=\non(\M)$ and is not supercompact.
\end{theorem}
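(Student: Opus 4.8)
The strategy is to convert the non-meagerness input of Theorem~\ref{disjoint} into a concrete closed subspace of $\II\times\alpha D$ to which Theorem~\ref{super} applies in the contrapositive. First I would invoke part (1) of Theorem~\ref{disjoint} with the crowded Polish space $X=\II$ to obtain a non-meager subfamily $\F\subseteq\exp(\II)$ of cardinality $\non(\M)$. Enumerate $\F=\{F_i:i\in D_0\}$ over a subset $D_0\subseteq D$ of cardinality $\non(\M)$ (this is possible since $|D|\ge\non(\M)$), and define the candidate subspace by
$$X=\bigl(\{\infty\}\times\{0\}\bigr)\cup\bigcup_{i\in D_0}\bigl(F_i\times\{i\}\bigr),$$
or more robustly, take $X=\cl\bigl(\bigcup_{i\in D_0}F_i\times\{i\}\bigr)$ in $\II\times\alpha D$, so that the section $X_i$ equals $F_i$ for every $i\in D_0$ and $X_i=\emptyset$ for $i\in D\setminus D_0$.

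The first genuine step is to verify that $X$ is closed in the compact space $\II\times\alpha D$; since each $F_i$ is compact and the only accumulation of the sections occurs over $\infty$, the closure simply adds a closed subset of $\II\times\{\infty\}$, and $X$ is compact. Next I would compute the weight: each section $F_i\subseteq\II$ is second countable, there are $\non(\M)$ of them, and the compactifying coordinate contributes nothing beyond this, so $w(X)\le\non(\M)$; the reverse inequality follows because $X$ maps onto $\alpha D_0$ (projection to the second coordinate), forcing $w(X)\ge|D_0|=\non(\M)$. Hence $w(X)=\non(\M)$, as claimed.

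The decisive step is non-supercompactness. Here I would argue by contraposition using Theorem~\ref{super}: that theorem asserts that if a closed subspace of $\II\times\alpha D$ is supercompact, then its family of sections $\{X_i:i\in D\}$ is meager in $\exp(\II)$. But by construction $\{X_i:i\in D\}=\{F_i:i\in D_0\}=\F$ (together with possibly the empty set, which is irrelevant as $\exp(\II)$ consists of non-empty closed sets, so I would take the enumeration to land inside $\exp(\II)$), and $\F$ was chosen to be \emph{non-meager} in $\exp(\II)$. This contradiction shows $X$ cannot be supercompact.

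I expect the main obstacle to be bookkeeping rather than any deep idea: one must ensure the sections of the constructed $X$ genuinely reproduce the non-meager family $\F$ and that passing to the closure over $\infty$ does not enlarge or alter the relevant sections $X_i$ for $i\in D$ in a way that destroys the section-family's non-meagerness. Concretely, the point is that the sections over $D$ are exactly the prescribed $F_i$ regardless of what the closure does over $\{\infty\}$, since adding limit points in the fiber $\II\times\{\infty\}$ changes only $X_\infty$ and leaves every $X_i$ with $i\in D$ untouched. Once this is checked, Theorem~\ref{super} delivers the conclusion immediately, so the only care required is the elementary verification that $X$ is closed and that its $D$-indexed sections coincide with the non-meager family $\F$.
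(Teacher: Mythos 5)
Your proposal is correct and follows essentially the same route as the paper: obtain a non-meager family $\F\subseteq\exp(\II)$ of cardinality $\non(\M)$ from Theorem~\ref{disjoint}(1), realize its members as the sections $X_i$ over a subset of $D$ of cardinality $\non(\M)$, verify $w(X)=\non(\M)$, and conclude non-supercompactness by the contrapositive of Theorem~\ref{super}. The only (immaterial) difference is that the paper forces closedness by including the whole fiber $\II\times\{\infty\}$, whereas you take the closure, which adds only limit points over $\infty$ and, as you correctly note, leaves every section $X_i$ with $i\in D$ equal to the prescribed $F_i$.
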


\begin{proof} 
Let $\mathcal{X}$ be a non-meager subset of $\exp(\II)$ of cardinality $\non(\M)$. Fix a subset
$E\subseteq D$ of cardinality $\non(\M)$ and enumerate the family $\mathcal X$ as $\mathcal X=\{X_i:i\in E\}$. Then $X=\II\times\{\infty\}\cup\bigcup_{i\in E}(X_i\times\{i\})$ is a closed subspace of $\II\times \alpha D$ of weight $w((X)=\non(\M)$, which is not supercompact according to Theorem~\ref{super}.
\end{proof}

\begin{theorem}\label{t5.2} Under $\cof(\M)=\w_1$ the product $\II\times\alpha D$ of the closed interval $[0,1]$ and the one-point compactification $\alpha D$ of a discrete space $D$ of cardinality $|D|\ge\omega_1$ contains a closed subspace $X\subseteq \II$ which has weight $w(X)=\w_1$, is first countable and hereditarily paracompact but not supercompact.
\end{theorem}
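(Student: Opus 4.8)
The plan is to assemble this from the machinery of Section~3 together with the family produced in Theorem~\ref{disjoint}(2), exploiting the fact that, among the three countability notions for indexed families of subsets of $\II$, the property \emph{nwd-countable} is the strongest: applied to the nowhere dense singletons $\{x\}$ (legitimate since $\II$ is crowded) it yields point-countability, and it implies weak nwd-countability by the implication diagram preceding Proposition~\ref{pointc}. Hence a single family that is \emph{simultaneously} non-meager and nwd-countable will deliver everything at once: non-meagerness defeats supercompactness through Theorem~\ref{super}, point-countability gives first countability through Proposition~\ref{pointc}, and weak nwd-countability gives hereditary paracompactness through Theorem~\ref{para}.

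I would begin by invoking Theorem~\ref{disjoint}(2): since $\II=[0,1]$ is a crowded Polish space, the hypothesis $\cof(\M)=\w_1$ supplies a non-meager nwd-countable family $\mathcal D\subseteq\exp(\II)$ of cardinality $\w_1$. Because $\mathcal D$ is non-meager in the crowded Polish space $\exp(\II)$ it is uncountable, so I may fix $E\subseteq D$ with $|E|=\w_1$ (possible as $|D|\ge\w_1$) and index $\mathcal D$ faithfully as $\{X_i:i\in E\}$, setting $X_i=\emptyset$ for $i\in D\setminus E$. Then, exactly as in the proof of Theorem~\ref{main}, I would define
$$X=\big(\II\times\{\infty\}\big)\cup\bigcup_{i\in E}\big(X_i\times\{i\}\big)\subseteq\II\times\alpha D.$$
A short verification shows $X$ is closed: every point over a $j\in D\setminus E$ has the clopen neighborhood $\II\times\{j\}$ missing $X$, every point over $j\in E$ lies outside the closed column $X_j\times\{j\}$ and is separated by a clopen column, and all points over $\infty$ already belong to $X$ since the $\infty$-section is the whole interval. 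Thus the family of sections $\{X_i:i\in D\}$ is, up to the irrelevant empty sections, precisely $\mathcal D$, so it is non-meager in $\exp(\II)$ and nwd-countable.

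It then remains only to read off the four properties. For the weight, the columns $X_i\times\{i\}$, $i\in E$, are $\w_1$ many pairwise disjoint nonempty open subsets of $X$ (each $\{i\}$ is open in $\alpha D$), so $c(X)\ge\w_1$ and hence $w(X)\ge\w_1$; the reverse inequality follows from the embedding $X\subseteq\II\times\alpha E$, a space of weight $\w_1$. Since $\{X_i:i\in D\}$ is nwd-countable it is point-countable, so Proposition~\ref{pointc} makes $X$ first countable; being also weakly nwd-countable, the implication $(3)\Ra(1)$ of Theorem~\ref{para} makes $X$ hereditarily paracompact. Finally, because $\{X_i:i\in D\}=\mathcal D$ is \emph{non}-meager in $\exp(\II)$, the contrapositive of Theorem~\ref{super} shows that $X$ is not supercompact.

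I do not expect a computational obstacle here: all the substantive work resides in Theorems~\ref{disjoint}, \ref{para}, \ref{super} and Proposition~\ref{pointc}, and the argument is essentially their alignment. The one point demanding genuine care is the choice of the family, which must carry two opposing requirements at once---being topologically large (non-meager, to kill supercompactness) yet combinatorially thin on nowhere dense sets (nwd-countable, to force first countability and hereditary paracompactness). Reconciling these is exactly where the hypothesis $\cof(\M)=\w_1$ enters, through the cofinal $\w_1$-towers of meager sets used to build $\mathcal D$ in Theorem~\ref{disjoint}(2); everything else is verification against the equivalences already established.
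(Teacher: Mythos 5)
Your proposal is correct and follows essentially the same route as the paper's proof: take the non-meager nwd-countable family $\mathcal D\subseteq\exp(\II)$ of cardinality $\w_1$ from Theorem~\ref{disjoint}(2), form $X=\big(\II\times\{\infty\}\big)\cup\bigcup_{i\in E}\big(X_i\times\{i\}\big)$ as in Theorem~\ref{main}, and read off non-supercompactness from Theorem~\ref{super}, first countability from Proposition~\ref{pointc} via point-countability, and hereditary paracompactness from Theorem~\ref{para} via weak nwd-countability. Your added verifications (closedness of $X$, the cellularity argument for $w(X)=\w_1$, and the remark that nwd-countability yields point-countability because singletons are nowhere dense in the crowded space $\II$) are details the paper leaves implicit, and they are all sound.
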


\begin{proof}
By the Theorem~\ref{disjoint}~(2) there is a non-meager nwd-countable subset $\mathcal{X}\subseteq \exp(\II)$ of cardinality $\omega_1$. This set can be enumerated as $\mathcal X=\{X_i:i\in E\}$ for some subset $E\subset D$ of cardinality $|E|=\w_1$. By Theorem~\ref{super}, the closed subspace $X=\II\times\{\infty\}\cup\bigcup_{i\in E}(X_i\times\{i\})$ of $\II\times\alpha D$ is not supercompact.
The nwd-countablity of $\mathcal{X}$ implies that $\mathcal{X}$ is both point-countable and weakly nwd-countable. Hence, by Proposition~\ref{pointc}, the space $X$ is first countable, and by Theorem~\ref{para}, the space $X$ is hereditarily paracompact.
\end{proof}

Combining Theorem~\ref{t5.2} with Proposition~\ref{p3.1}, we get:

\begin{corollary}\label{c5.3} Under $\cof(\M)=\w_1$ there is a compact space $X$
having the following properties:
\begin{enumerate}
\item $X$ has weight $w(X)=\w_1$;
\item $X$ is uniform Eberlein compact;
\item $X$ is Rosenthal compact;
\item $X$ is first countable;
\item $X$ is hereditarily paracompact;
\item $X$ is not supercompact.
\end{enumerate}
\end{corollary}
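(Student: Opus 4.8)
The plan is to take the closed subspace $X\subseteq\II\times\alpha D$ produced by Theorem~\ref{t5.2} and simply read off the six required properties, supplementing the topological conclusions of that theorem with the embedding information supplied by Proposition~\ref{p3.1}. No new construction is needed; the corollary is a bookkeeping assembly of results already in hand.

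First I would invoke Theorem~\ref{t5.2}. Since that theorem requires only that the discrete space $D$ have cardinality $|D|\ge\w_1$, I am free to fix $D$ of cardinality exactly $\w_1$. Under the standing hypothesis $\cof(\M)=\w_1$, the theorem then yields a closed subspace $X\subseteq\II\times\alpha D$ which has weight $w(X)=\w_1$, is first countable, is hereditarily paracompact, and is not supercompact. This immediately delivers properties (1), (4), (5), and (6) of the corollary.

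It remains to establish (2) and (3), the uniform Eberlein and Rosenthal compactness of $X$. For this I would apply Proposition~\ref{p3.1} with the compact metrizable space $K=\II$ and the same discrete space $D$. The only hypothesis to verify is the cardinality bound $|D|\le\mathfrak c$; since I have chosen $|D|=\w_1$ and the inequality $\w_1\le\mathfrak c$ holds in ZFC, this is automatic. Proposition~\ref{p3.1} then guarantees that $\II\times\alpha D$ is uniform Eberlein and Rosenthal compact, and that every closed subspace of $\II\times\alpha D$ inherits both properties. In particular the subspace $X$ is uniform Eberlein compact and Rosenthal compact, yielding (2) and (3).

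I do not expect any genuine obstacle here: the substantive work lies in the proof of Theorem~\ref{t5.2} (itself resting on Theorem~\ref{super}, Proposition~\ref{pointc}, Theorem~\ref{para}, and the construction of Theorem~\ref{disjoint}) and in Proposition~\ref{p3.1}. The single point that deserves a moment's attention is the compatibility of the two cardinality side-conditions—Theorem~\ref{t5.2} demands $|D|\ge\w_1$ while Proposition~\ref{p3.1} demands $|D|\le\mathfrak c$—which is resolved once and for all by taking $|D|=\w_1$, a value satisfying both inequalities simultaneously.
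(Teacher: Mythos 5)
Your proposal is correct and coincides with the paper's own (one-line) derivation: the authors likewise obtain Corollary~\ref{c5.3} by combining Theorem~\ref{t5.2} with Proposition~\ref{p3.1}, and your explicit check that choosing $|D|=\w_1\le\mathfrak c$ reconciles the two cardinality hypotheses is a sound, if routine, addition.
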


\begin{problem} Does there exist a ZFC-example of a compact spaces with properties (1)--(6) of  Corollary~\ref{c5.3}?
\end{problem}

It is known and easy to prove that for any supercompact spaces $X,Y$ their product $X\times Y$ again is supercompact.
Theorem~\ref{main} implies that in contrast to the supercompactness, the hereditary supercompactness is not productive.

\begin{corollary} The closed interval $\II$ and the one-point compactification $\alpha D=\{\infty\}\cup D$ of a discrete space of cardinality $|D|=\non(\M)$ are hereditarily supercompact but their product $[0,1]\times\alpha D$ is not.
\end{corollary}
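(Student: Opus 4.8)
The plan is to assemble this statement from three results already proved, since the corollary is really just the observation that hereditary supercompactness fails to be productive. Accordingly, I would treat the two positive assertions and the single negative assertion separately, and in each case cite the relevant earlier result rather than argue from scratch.

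First I would dispose of the positive halves. The closed interval $\II=[0,1]$ is a linearly ordered compact space, hence monotonically normal, so by Theorem~\ref{monotone} it is hereditarily supercompact; alternatively, every closed subspace of $\II$ is compact metrizable and therefore supercompact by the Strok--Szymanski Theorem. For the second factor, Corollary~\ref{onepoint} asserts directly that the one-point compactification $\alpha D$ of an arbitrary discrete space $D$ is hereditarily supercompact, and this covers the case $|D|=\non(\M)$ at hand. Thus both factors are hereditarily supercompact, and no new argument is needed for these claims.

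Next I would establish that the product is \emph{not} hereditarily supercompact. By the definition of hereditary supercompactness it suffices to produce a single closed subspace of $\II\times\alpha D$ that fails to be supercompact. Since $|D|=\non(\M)\ge\non(\M)$, the hypothesis of Theorem~\ref{main} is satisfied, and that theorem supplies a closed subspace $X\subseteq\II\times\alpha D$ of weight $\non(\M)$ which is not supercompact. The mere existence of such an $X$ witnesses that $\II\times\alpha D$ is not hereditarily supercompact, which completes the argument.

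I expect no genuine obstacle to remain at this level: all of the substance is concealed inside Theorem~\ref{main} and, through it, Theorem~\ref{super}, where a non-meager family in $\exp(\II)$ of cardinality $\non(\M)$ furnishes the sections $X_i$ of the bad subspace, and the binary closed $k$-network characterization of Theorem~\ref{k-net} is what ultimately yields a contradiction with supercompactness. The only subtlety worth recording is that $\non(\M)\ge\aleph_1$, so that $D$ is genuinely infinite and the construction of Theorem~\ref{main} is non-vacuous; granting this, the corollary follows formally from the cited results.
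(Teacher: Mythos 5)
Your proposal is correct and matches the paper's (implicit) argument exactly: the paper states this corollary without a separate proof, deriving it precisely as you do from Theorem~\ref{monotone} (or Strok--Szymanski) for $\II$, Corollary~\ref{onepoint} for $\alpha D$, and Theorem~\ref{main} for the failure of supercompactness in a closed subspace of the product. Nothing is missing, and your remark that $\non(\M)\ge\aleph_1$ keeps the construction non-vacuous is a sound, if minor, addition.
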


\end{document}